\newtheorem{thm}{Theorem}[section]
\newtheorem{cor}[thm]{Corollary}
\newtheorem{lem}[thm]{Lemma}
\newtheorem{prop}[thm]{Proposition}
\theoremstyle{definition}
\newtheorem{defn}[thm]{Definition}
\theoremstyle{remark}
\def\D{{\mathbb D}}
\begin{document}

\title {Integral Transforms for Logharmonic Mappings}

\author{H. Arbel\'aez, V. Bravo, R. Hern\'andez, W. Sierra,  \and O. Venegas }
%
\begin{abstract} Bieberbach's conjecture was very important in the development of Geometric Function Theory, not only because of the result itself, but also due to the large amount of methods that have
been developed in search of its proof, it is in this context that the integral transformations of the type $f_\alpha(z)=\int_0^z(f(\zeta)/\zeta)^\alpha d\zeta$ or $F_\alpha(z)=\int_0^z(f'(\zeta))^\alpha d\zeta$  appear. In this notes we extend the classical problem of finding the values of $\alpha\in\mathbb{C}$ for which either $f_\alpha$ or $F_\alpha$ are univalent, whenever $f$ belongs to some subclasses of univalent mappings in $\D$, to the case of logharmonic mappings, by considering the extension of the \textit{shear construction} introduced by Clunie and Sheil-Small in \cite{CSS} to this new scenario.
\end{abstract}

\address{Facultad de Ciencias, Universidad Nacional de Colombia, sede Medell\'in, Colombia.}\email{hjarbela@unal.edu.co}
\address{Facultad de Ingenier\'ia y Ciencias. Universidad Adolfo Ib\'a\~nez\\
Av. Padre Hurtado, Vi\~na del Mar, CHILE.}
\email{rodrigo.hernandez@uai.cl}\email{victor.bravo.g@uai.cl}
\address{Departamento de Matem\'aticas, Universidad del Cauca, Popay\'an, Colombia.}\email{wsierra@unicauca.edu.co}
\address{Departamento de Ciencias Matem\'{a}ticas y F\'{\i}sicas. Facultad de Ingenier\'{\i}a\\ Universidad Cat\'olica de
Temuco.} \email{ovenegas@uct.cl}

\subjclass[2010]{31A05, 30C45}
\keywords{Integral transform, Logharmonic mappings, Shear construction, Univalent mappings.}
\date{\today}
\thanks{The second, third, and fifth author are supported by grant Fondecyt $\#$1190756, Chile.}
\thanks{The first author was supported by the Universidad Nacional de Colombia, Hermes Code 49148.}
\thanks{The fourth author was supported by the Universidad del Cauca through research project VRI ID 5464.}


%

\maketitle
\section{Introduction}

One of the most studied classes of functions in the context of Geometric Function Theory is the well known class $S$ of univalent functions $f$ defined in the unit disk, normalized by $f(0)=1-f'(0)=0.$ The investigations of problems associated to the class $S$ and some of its subclasses go back to the works of Koebe, developed at beginning of the last century. Undoubtedly, one of the most important problems in this field was the Bieberbach conjecture, presented in 1916 and solved by de Brange in 1984. Solving this problem had a profound influence in the development of the theory of univalent functions, providing, among other things, powerful methods which have been used to study other problems in Geometric Function Theory. One important question arising in this context is determine the values of $\alpha\in\mathbb{C}$ for which the functions 
\begin{equation}\label{int-tipo-1}
\varphi_\alpha(z)=\int_0^z\left(\frac{\varphi(\zeta)}{\zeta}\right)^\alpha d\zeta,\end{equation}
or
\begin{equation}\label{int-tipo-2} \Phi_\alpha(z)=\int_0^z(\varphi'(\zeta))^\alpha d\zeta,
\end{equation}
belong to the class $S,$ when $\varphi$ is a function of this class. The univalence of these integral operators was first studied by Royster \cite{Ro65}, and although some partial results have been obtained, in the general case, this question remains open. It is known, for example, that if $|\alpha|\leq 1/4$ then both transformations are univalent, see \cite{km,Pf75}. However, there are functions $\varphi\in S$ for which $\Phi_\alpha$ is not univalent for any $|\alpha|>1/3,$ with $\alpha\neq 1.$ An analogous result was obtained in \cite{km} for the transformation \eqref{int-tipo-1}; in that paper it is proved that for all $|\alpha|>1/2,$ there are functions $\varphi\in S,$ such that the corresponding $\varphi_\alpha$ is not univalent. For a summary of these results we refer the reader to the classical book by Goodman \cite{G}. Some recent results and other problems related to the transformations \eqref{int-tipo-1} and \eqref{int-tipo-2}, in the context of analytic and meromorphic functions, can be found in \cite{Kumar-Sahoo, Kim-Ponnuamy-Sugawa, Nezhmetdinov-Ponnusamy, Ponnusamy-Sahoo-Sugawa}.

Since the work by J. Clunie and T. Sheil-Small \cite{CSS}, many problems  of geometric function theory have been extended, from the setting of holomorphic functions to the  wider class of harmonic mappings in the plane. In this direction, in \cite{VBRHOV1} and subsequently in \cite{ABHSV1}, the authors proposed an extension of the integral transformations \eqref{int-tipo-1} and \eqref{int-tipo-2} to the setting of sense-preserving harmonic mapping. The definitions given in \cite{ABHSV1}  make use of the \textit{shear construction} introduced by J. Clunie and T. Sheil-Small in \cite{CSS} as follows: let $f=h+\overline g$ a sense-preserving harmonic mapping in the unit disk $\mathbb{D}=\left\lbrace z\in\mathbb{C} : |z|<1 \right\rbrace,$ with the usual normalization $g(0)=h(0)=1-h'(0)=0,$ and dilatation $\omega=g'/h'.$ Given $\alpha\in\overline{\mathbb{D}},$ when $\varphi=h-g$ is zero only at $z=0,$ we define $F_\alpha$ as the horizontal shear of $\varphi_\alpha$ defined by \eqref{int-tipo-1} with dilatation $\omega_\alpha=\alpha\omega.$ This is, $F_\alpha=H+\overline{G},$ where $H,G$ satisfy
\[H-G=\varphi_\alpha\qquad\text{and}\qquad \frac{G'}{H'}=\alpha\omega,\]
with $H(0)=G(0)=0.$ In a similar way we extend the integral transform \eqref{int-tipo-2} to sense-preserving harmonic mapping in $\mathbb{D}.$ The reader can find interesting results, some of which are related to the question of knowing for which values of $\alpha,$ the integral transformations lead functions belonging to the class $S$ (or to some of its subclasses) to the class $S$, in \cite{ABHSV1}.

The main objective of the present paper is investigate this type of problems, but in the context of logharmonic mappings defined in a simply connected domain of the complex plane. To this end, we extend the integral operators defined by \eqref{int-tipo-1} and \eqref{int-tipo-2} to the case of logharmonic mappings. Analogously to the extension of \eqref{int-tipo-1} and \eqref{int-tipo-2} previously described for sense-preserving harmonic mapping, we use a method similar to the shear construction by Clunie and Sheil-Small, introduced in \cite{AR,LP} for constructing univalent log-harmonic mappings. The manuscript is organized as follows. In Section\,\ref{sec log map} some preliminaries concerning logharmonic mappings will be introduced, in Section\,\ref{sec I T of firs type} we introduce the integral transform of the first type for logharmonic mappings, in particular, we generalize a result obtained by J. Pfalztgraff in \cite{Pf75}. The extension of the integral transform of the type \eqref{int-tipo-2} to logharmonic mappings is given in Section\,\ref{sec Iint T. 2 type}, in which we obtain similar results to the obtained in the section\,\ref{sec I T of firs type}. Finally, Section\,\ref{sec non vanishing log harmonic map} is devoted to extend the integral transforms to the special case of non-vanishing logharmonic mapping.

\section{Some preliminaries about logharmonic mappings}\label{sec log map}

A logharmonic mapping defined in the unit disk $\mathbb{D}$ is a solution of the nonlinear elliptic partial differential equation \begin{equation*}\label{def-log}
\overline{f_{\overline{z}}(z)}=\omega(z)\left(\dfrac{\overline{f(z)}}{f(z)}\right)f_z(z),\end{equation*} where $\omega$ is an analytic function from $\mathbb{D}$ into itself, which is called second complex dilatation of $f.$ The study of logharmonic functions was initiated mainly by the works of Abdulhadi, Bshouty, and Hengartner \cite{AB88,AH}, and later continued in a series of papers in which is developed the basic theory of logharmonic functions. Since then many papers have been published dealing with this subject, see for example \cite{AE,AM,Aydogan 1,LP,MPW13}.

Because of the condition on $\omega,$ the Jacobian $J_f$ of $f,$ given by
\begin{equation*}\label{jacobiano}
J_f=|f_z|^2-|f_{\overline{z}}|^2=|f_z|^2(1-|\omega|^2),
\end{equation*}
is non negative, and therefore, every non-constant logharmonic mapping is sense preserving and open in $\D.$ If $f$ is a non-constant logharmonic mapping defined in $\D$ and vanishes only at the origin, then $f$ has the representation \begin{equation*}\label{representation-Log}
f(z)=z^m|z|^{2\beta m}h(z)\overline{g(z)},
\end{equation*}
where $m$ is a non-negative integer, $\mathrm{Re}\{\beta\}>-m/2$, and $h$ and $g$ are analytic mappings in the unit disc, such that $g(0)=1$ and $h(0)\neq 0$ (see \cite{AB88}). In particular, when $f$ is a univalent logharmonic mapping defined in $\mathbb{D}$ and $f(0)=0,$ we can represent $f$ in the form
\begin{equation*}\label{representation f univalent}
f(z)=z|z|^{2\beta}h(z)\overline{g(z)},\qquad z\in\mathbb{D},
\end{equation*}
where $\mathrm{Re}\{\beta\}>-1/2$, and $h$ and $g$ are analytic mappings in $\mathbb{D},$ such that $g(0)=1$ and $0\notin hg(\mathbb{D}).$ This class of logharmonic mappings has been widely studied. For more details the reader can read the summary paper in \cite{AR} and the references given there.

In the first part of this study, we consider sense-preserving univalent log-harmonic mappings in $\mathbb{D}$ with $\omega(0)=0,$ case in which $f$ has the form
\[f(z)=zh(z)\overline{g(z)},\qquad z\in\mathbb{D},\]
which implies that its dilatation $\omega$ is given by $$\omega(z)=\frac{zg'(z)/g(z)}{1+zh'(z)/h(z)}.$$ 
In relation to this class of functions, denoted by $S_{Lh},$ the following result, which is one of the tools that we shall use in this manuscript, was proved in \cite{AM}, see also \cite{AH}; it asserts that\\

\noindent \textbf{Theorem A}\textit{ Let $f(z)=zh(z)\overline{g(z)}$ be a logharmonic mapping defined in $\D$ such that $0\notin hg(\mathbb{D})$. Then $\varphi(z)=zh(z)/g(z)$ is a starlike analytic function if and only if $f(z)$ is a starlike logharmonic mapping.}\\

Also we will consider non-vanishing logharmonic mappings in $\mathbb{D}.$ It is well known that such mappings can be expressed in the form
\begin{equation*}
f(z) = h(z)\overline{g(z)},
\end{equation*}
where $h$ and $g$ are non-vanishing analytic functions in $\D.$ In terms of $h$ and $g,$ when $f$ is locally univalent, the dilatation $\omega$ of $f$ is given by
\begin{equation*}\label{omega}
\omega=\frac{g'h}{gh'}.
\end{equation*}
Note that in this case, $h$ is locally univalent.\\
 
Trough this paper, we shall mainly study integral transformations of the type \eqref{int-tipo-1} and \eqref{int-tipo-2} for two class of logharmonic functions:  first we will consider univalent logharmonic mappings of the form $f(z)=zh(z)\overline{g(z)}$ normalized as $h(0)=g(0)=1$ and its several subfamilies as starlike and convex mappings normalized as before. In a similar way we will study the case when $f$ is a non-vanishing logharmonic function of the form $f=h\overline{g}$ such that $h(0)=g(0)=1.$

\section{Integral Transform of the first type}\label{sec I T of firs type}

Let $f=zh(z)\overline{g(z)}$ be a locally univalent logharmonic mapping defined in the unit disk with the normalization described above. Then $\varphi(z)=zh(z)/g(z)$ is an analytic function in $\mathbb{D}$ such that $\varphi(0)=0,$ $\varphi'(0)=1,$ and $\varphi(z)\neq 0$ if $z\neq 0.$ So, $h,g$ are solution of the system of nonlinear differential equations
\[\frac{zh(z)}{g(z)}=\varphi(z)\qquad\text{and} \qquad \frac{zg'(z)/g(z)}{1+zh'(z)/h(z)}=\omega(z),\quad z\in\mathbb{D}.\]
This fact was used in \cite{LP} to establish a method of construction of logharmonic mappings satisfying certain properties, allows us extend the integral transformation \eqref{int-tipo-1} to the case of logharmonic mappings, with the above notation, as follows: given $\alpha\in\overline{\mathbb{D}},$ we define $f_\alpha$ be the logharmonic mapping given by $f_\alpha(z)=zH(z)\overline{G(z)},$ where $H,G$ satisfy the system
\begin{equation}\label{f alpha tipo 1} \frac{zH(z)}{G(z)}=\varphi_\alpha(z)=\int_{0}^{z}\left(\frac{\varphi(\xi)}{\xi}\right)^\alpha d\xi\quad\text{and}\quad \frac{zG'(z)/G(z)}{1+zH'(z)/H(z)}=\omega_\alpha(z):=\alpha\omega(z),\end{equation}
with the initial conditions $H(0)=G(0)=1.$

Note that if $\alpha=0,$ then $H/G\equiv 1$ and $\omega_\alpha=0$, whence it follows that $G$ is constant, say $k$, which leads to $f_\alpha(z)=|k|^2z$. On the other hand, if $\alpha=1$ we have that $\varphi_1(z)=zH(z)/G(z)$ 
satisfies
\[1+z\frac{\varphi_1''(z)}{\varphi_1'(z)}=z\frac{\varphi'(z)}{\varphi(z)}\,,\]
which implies that when $\varphi$ is starlike, then $\varphi_1$ is convex (in particular starlike). Consequently, according to \cite{LP}, $f_1$ is starlike. However, we know from the analytical case that for $\alpha$ of module greater than $1/3,$ there are univalent functions whose respective functions $f_\alpha$ are not univalent in $\D$. For this and other similar problems that appear both in the context of analytic functions and in the context of harmonic mapping, we are interested in studying the values of $\alpha$ for which we can ensure that $f_\alpha$ belongs to $S_{Lh}$ or to some of its subclasses. A first result in this direction is given in the following proposition.

\begin{prop}\label{prop f_alpha is starlike} Let $f(z)=zh(z)\overline{g(z)}$ be a logharmonic mapping defined in $\D$ with dilatation $\omega$. If $zh(z)/g(z)=\varphi(z)$ is a convex mapping, then $f_\alpha$ defined by (\ref{f alpha tipo 1}) is a starlike logharmonic mapping in the unit disk for $\alpha\in[0,1]$.
\end{prop}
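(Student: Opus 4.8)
The plan is to reduce the claim, via Theorem A, to showing that the analytic factor $\varphi_\alpha(z)=zH(z)/G(z)$ of $f_\alpha$ is a starlike function, and to obtain this by a direct logarithmic-derivative estimate. First I would compute using the defining relation $\varphi_\alpha'(z)=(\varphi(z)/z)^\alpha$ (with the branch equal to $1$ at the origin, which is available since $\varphi(z)/z$ is non-vanishing on the simply connected domain $\mathbb{D}$): taking logarithms, $\log\varphi_\alpha'(z)=\alpha\big(\log\varphi(z)-\log z\big)$, and differentiating gives
\[
1+z\frac{\varphi_\alpha''(z)}{\varphi_\alpha'(z)}=(1-\alpha)+\alpha\,\frac{z\varphi'(z)}{\varphi(z)},\qquad z\in\mathbb{D}.
\]
Since $\varphi$ is convex, the classical Marx--Strohh\"acker estimate gives $\mathrm{Re}\{z\varphi'(z)/\varphi(z)\}>1/2$ on $\mathbb{D}$, so for every $\alpha\in[0,1]$,
\[
\mathrm{Re}\left\{1+z\frac{\varphi_\alpha''(z)}{\varphi_\alpha'(z)}\right\}>(1-\alpha)+\frac{\alpha}{2}=1-\frac{\alpha}{2}\ge\frac12>0 .
\]
Together with $\varphi_\alpha(0)=0$ and $\varphi_\alpha'(0)=1$, this says that $\varphi_\alpha$ is a normalized convex function; in particular it is univalent and starlike, and $\varphi_\alpha(z)=0$ only at $z=0$.

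Next I would recover $H$ and $G$ from \eqref{f alpha tipo 1} as in the shear-type construction of \cite{LP}: eliminating $G=zH/\varphi_\alpha$ from the second equation yields
\[
1+z\frac{H'(z)}{H(z)}=\frac{z\varphi_\alpha'(z)/\varphi_\alpha(z)}{1-\alpha\omega(z)},
\]
whose right-hand side is analytic on $\mathbb{D}$ — because $1-\alpha\omega$ never vanishes, as $|\alpha\omega|<1$, and $z\varphi_\alpha'/\varphi_\alpha$ is analytic by the previous paragraph — and equals $1$ at the origin. Hence $H$ is a well-defined non-vanishing analytic function with $H(0)=1$, and then $G=H/(\varphi_\alpha(z)/z)$ is a non-vanishing analytic function with $G(0)=1$. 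Consequently $f_\alpha=zH\overline{G}$ is indeed a logharmonic mapping with $0\notin HG(\mathbb{D})$, and since its analytic factor $\varphi_\alpha=zH/G$ is starlike, Theorem A shows that $f_\alpha$ is a starlike logharmonic mapping in $\mathbb{D}$. (For $\alpha=0$ one has directly $\varphi_0(z)=z$, so $H\equiv G\equiv 1$ and $f_0(z)=z$.)

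I do not anticipate a genuine difficulty: the heart of the matter is the one-line real-part estimate above, which uses convexity of $\varphi$ in the strong form $\mathrm{Re}\{z\varphi'/\varphi\}>1/2$ (in fact even $\mathrm{Re}\{z\varphi'/\varphi\}>0$, i.e. starlikeness of $\varphi$, would suffice, since for $\alpha=1$ the identity above collapses to $1+z\varphi_1''/\varphi_1'=z\varphi'/\varphi$). The only slightly delicate point is the bookkeeping in the second paragraph — verifying that the construction really returns non-vanishing $H$ and $G$, so that the hypothesis $0\notin HG(\mathbb{D})$ of Theorem A is met; this is exactly where the well-posedness of the shear-type method of \cite{LP} and the fact that $\alpha\omega$ is again an admissible dilatation enter.
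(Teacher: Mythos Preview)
Your proof is correct and follows essentially the same route as the paper: compute $1+z\varphi_\alpha''/\varphi_\alpha'=(1-\alpha)+\alpha\,z\varphi'/\varphi$, invoke the Marx--Strohh\"acker bound $\mathrm{Re}\{z\varphi'/\varphi\}>1/2$ from convexity of $\varphi$ to conclude $\varphi_\alpha$ is convex (hence starlike), and then apply Theorem~A. Your additional bookkeeping paragraph verifying that $H$ and $G$ are non-vanishing is a welcome clarification that the paper leaves implicit; conversely, the paper opens with a brief check that the convexity of $\varphi$ forces $J_f>0$, ensuring the construction of $f_\alpha$ is well-posed.
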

\begin{proof} We note first that if $f_z(z_0)=0$ for some $z_0\in\mathbb{D},$ then $z_0\neq 0,$ 
\[z_0h'(z_0)+h(z_0)=0 \qquad \text{and}\qquad g'(z_0)=0,\]
which implies $\varphi'(z_0)=0.$ Hence, $J_f$ is positive in $\mathbb{D}.$

By definition of $f_\alpha(z)=zH(z)\overline{G(z)}$ we have that $zH(z)/G(z)=\varphi_\alpha(z)$ satisfies $$1+\mbox{Re}\left\{z\frac{\varphi''_\alpha (z)}{\varphi'_\alpha (z)}\right\}=1+\alpha\mbox{Re}\left\{z\frac{\varphi'(z)}{\varphi(z)}-1\right\},$$
$\alpha\in\mathbb{R}.$ Since $\varphi$ is convex, it follows that $\textrm{Re}\{z\varphi'(z)/\varphi(z)\}>1/2$ which implies that $$1+\mbox{Re}\left\{z\frac{\varphi''_\alpha (z)}{\varphi'_\alpha(z)}\right\}> 1-\frac{\alpha}{2}>0,$$
for $\alpha\in[0,1].$ Hence, $\varphi_\alpha$ is convex and, in particular, is a starlike mapping. Applying Theorem\,A the proof is concluded. 
\end{proof}
We remark that the conclusion of the proposition remains true if $\alpha\in [0,\rho],$ where $\rho=\min\{2,1/\|\omega\|\},$ and $\|\omega\|=\sup\{ |\omega(z)| : z\in\mathbb{D}\}.$\\ 

In \cite{AE} the authors extend the concept of stable univalent function (resp. starlike, convex, close-to-convex, etc.) studied in \cite{rhmje} for harmonic and analytic functions, to the case of logharmonic mappings. More precisely, they give the
following definition.
\begin{defn}
A logharmonic function $f(z)=zh(z)\overline{g(z)}$, normalized by $h(0)=g(0)=1$, is called stable univalent logharmonic or $SU_{Lh}$, if for all $|\lambda|=1$, $f_{\lambda}(z)=zh(z)\overline{g(z)}^{\lambda}$ is univalent logharmonic.
\end{defn}
Next, we present a criterion to establish the stable univalence of $f_\alpha.$
\begin{thm}\label{Theorem-univ.}
Let $f(z)=zh(z)\overline{g(z)}$ be a logharmonic mapping defined in the unit disc with dilatation $\omega$, normalized by $h(0)=g(0)=1$, and such that $zh(z)/g(z)=\varphi(z)$ is a univalent mapping. Then $f_\alpha$ defined by (\ref{f alpha tipo 1}) is a stable univalent logharmonic mapping if $\alpha$ satisfies the inequality
\begin{equation}\label{cond-thm-1}
|\alpha|\leq\dfrac{1}{4\left(2\delta+1+8\|\omega^*\|/15\right)},
\end{equation} 
where
\[\delta=\frac{4\|\omega\|}{4-\|\omega\|}\qquad \text{and}\qquad \|\omega^\ast\|=\sup_{z\in\D}\frac{|\omega'(z)|(1-|z|^2)}{1-|\omega(z)|^2}.\]
\end{thm}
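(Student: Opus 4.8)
The strategy is to reduce the statement to a Becker–type univalence criterion, applied uniformly along the shear family of $f_\alpha$. By the definition of $SU_{Lh}$ it suffices to prove that for every $\lambda$ with $|\lambda|=1$ the logharmonic mapping $(f_\alpha)_\lambda(z)=zH(z)\,\overline{G(z)}^{\,\lambda}$ is univalent. Writing $\overline{G}^{\,\lambda}=\overline{G^{\,\overline\lambda}}$, one checks from \eqref{f alpha tipo 1} that $(f_\alpha)_\lambda$ has analytic associate $q_\lambda(z)=zH(z)/G(z)^{\overline\lambda}$ and second dilatation $\overline\lambda\,\alpha\,\omega$. Here $G$ is zero-free on $\D$: the hypothesis $\omega(0)=0$ makes $(\log G)'(z)=\alpha\omega(z)\varphi_\alpha'(z)/\bigl(\varphi_\alpha(z)(1-\alpha\omega(z))\bigr)$ holomorphic (once $|\alpha|\le1/4$, so that $\varphi_\alpha\in S$ has no zero off the origin), whence $q_\lambda$ is a well-defined element of $S$; and $|\alpha|\le1/4$ is forced by \eqref{cond-thm-1}, since $2\delta+1+8\|\omega^{\ast}\|/15\ge1$. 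The crucial observation is that $|\overline\lambda\,\alpha\,\omega(z)|=|\alpha\omega(z)|$ and $\|(\overline\lambda\,\alpha\,\omega)^{\ast}\|=\|(\alpha\omega)^{\ast}\|$, so that the estimates carried out below are the same for every $\lambda$.

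Next I would invoke the univalence criterion of Becker type for logharmonic mappings proved in \cite{AE} (refining \cite{rhmje}): a normalized logharmonic mapping $F=z\widetilde h\,\overline{\widetilde g}$ with locally univalent analytic associate $\widetilde\varphi$ and second dilatation $\widetilde\omega$ is univalent provided that, on $\D$, the sum of $(1-|z|^2)\,|z\widetilde\varphi''(z)/\widetilde\varphi'(z)|$, a dilatation-weighted multiple of the same quantity, and a term controlled by $(1-|z|^2)|z|\,|\widetilde\omega'(z)|/(1-|\widetilde\omega(z)|^2)$, is at most $1$. The content of the theorem is that, for the whole family $\{(f_\alpha)_\lambda\}_{|\lambda|=1}$, this inequality holds precisely under \eqref{cond-thm-1}. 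The key simplification is that $\varphi_\alpha'(z)=(\varphi(z)/z)^{\alpha}$ gives the transparent pre-Schwarzian $z\varphi_\alpha''(z)/\varphi_\alpha'(z)=\alpha\bigl(z\varphi'(z)/\varphi(z)-1\bigr)$; since $\varphi\in S$, the classical estimate $|z\varphi'(z)/\varphi(z)-1|\le 2|z|/(1-|z|)$ (see \cite{G}) gives $(1-|z|^2)\,|z\varphi_\alpha''(z)/\varphi_\alpha'(z)|\le 2|\alpha||z|(1+|z|)\le 4|\alpha|$ — the term producing the summand $1$ in the denominator of \eqref{cond-thm-1}, and recovering the classical $|\alpha|\le1/4$ when $\omega\equiv0$.

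For the dilatation corrections one uses that \eqref{cond-thm-1} forces $|\alpha|\le1/4$, hence $|\alpha\omega(z)|\le\|\omega\|/4$ and $|\alpha\omega(z)|/(1-|\alpha\omega(z)|)\le\|\omega\|/(4-\|\omega\|)$; fed, together with the bound $4|\alpha|$, into the dilatation-weighted pre-Schwarzian term, this produces the summand $2\delta$. Likewise $(1-|z|^2)\,|\alpha\omega'(z)|/(1-|\alpha\omega(z)|^2)\le|\alpha|\,\|\omega^{\ast}\|$ (using $1-|\alpha\omega|^2\ge1-|\omega|^2$ and $|\alpha|\le1$), and the remaining elementary optimization in $|z|$ — and in $|\omega|$ through $|\alpha|\le1/4$ — yields the constant $8/15$, i.e. the summand $8\|\omega^{\ast}\|/15$. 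Summing the three contributions, the hypothesis of the criterion becomes $4|\alpha|\bigl(1+2\delta+8\|\omega^{\ast}\|/15\bigr)\le1$, which is exactly \eqref{cond-thm-1}; as nothing in the estimates depends on $\lambda$, every $(f_\alpha)_\lambda$ is univalent and therefore $f_\alpha\in SU_{Lh}$.

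The main obstacle is the bookkeeping of the last two paragraphs: pinning down the exact form of the Becker-type functional for logharmonic mappings, and verifying that the mixed $\varphi_\alpha$–$\omega$ terms sharpen to the precise constants $2\delta$ and $8/15$ rather than to the cruder bounds that a first pass gives — together with the observation, essential for the ``stable'' part, that the criterion's hypothesis is manifestly invariant under $\omega\mapsto\overline\lambda\,\omega$ with $|\lambda|=1$. The local univalence of $q_\lambda$, the property $0\notin HG(\D)$, and the classical estimate for $|z\varphi'(z)/\varphi(z)-1|$ are routine.
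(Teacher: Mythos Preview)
Your high-level strategy (Becker uniformly in $\lambda$, the identity $z\varphi_\alpha''/\varphi_\alpha'=\alpha(z\varphi'/\varphi-1)$, and the classical bound coming from $\varphi\in S$) is correct, but there is a genuine gap in how you propose to execute it. The paper does \emph{not} invoke any logharmonic Becker criterion from \cite{AE}; it applies the \emph{classical analytic} Becker criterion directly to the analytic functions $\psi_\lambda(z)=zH(z)/G(z)^{\lambda}$. The step you are missing is the explicit decomposition
\[
\frac{\psi_\lambda''}{\psi_\lambda'}=\frac{(1-\lambda)\omega_\alpha}{1-\omega_\alpha}\,\frac{\varphi_\alpha'}{\varphi_\alpha}
\;+\;\frac{\varphi_\alpha''}{\varphi_\alpha'}
\;+\;\frac{(1-\lambda)\omega_\alpha'}{(1-\lambda\omega_\alpha)(1-\omega_\alpha)},
\]
obtained by differentiating $\psi_\lambda'=\psi_\lambda\,\dfrac{1-\lambda\omega_\alpha}{1-\omega_\alpha}\,\dfrac{\varphi_\alpha'}{\varphi_\alpha}$. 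Only the middle term is the pre-Schwarzian you analyze; the first term carries the \emph{starlikeness quotient} $z\varphi_\alpha'/\varphi_\alpha$, not a ``dilatation-weighted pre-Schwarzian'' as you write. Since $|\alpha|\le1/4$ gives $\varphi_\alpha\in S$, one uses $|z\varphi_\alpha'/\varphi_\alpha|\le(1+|z|)/(1-|z|)$, together with $|\omega|/|1-\omega_\alpha|\le 4\|\omega\|/(4-\|\omega\|)=\delta$ and $|1-\lambda|\le2$, to get the contribution $8|\alpha|\delta$, i.e.\ the summand $2\delta$.

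For the third term, the route you sketch through $(1-|z|^2)|\alpha\omega'|/(1-|\alpha\omega|^2)$ does not produce the stated constant. The paper instead bounds $|(1-\lambda\omega_\alpha)(1-\omega_\alpha)|\ge(1-|\omega|/4)^2$, inserts the definition of $\|\omega^\ast\|$, and maximizes $(1-t^2)/(1-t/4)^2$ over $t\in[0,1)$; the maximum is $16/15$ at $t=1/4$, and after the factor $|1-\lambda|\le2$ this yields $32|\alpha|\|\omega^\ast\|/15$, i.e.\ the summand $8\|\omega^\ast\|/15$. Summing the three contributions gives $(1-|z|^2)\bigl|z\psi_\lambda''/\psi_\lambda'\bigr|\le 4|\alpha|\bigl(2\delta+1+8\|\omega^\ast\|/15\bigr)$, and classical Becker finishes the proof. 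So the ``bookkeeping'' you flag is not incidental: it is the whole argument, and your vaguely specified criterion with a weighted pre-Schwarzian term would not reproduce the constants in \eqref{cond-thm-1}.
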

\begin{proof} 
As in the previous proposition, $J_f>0$ in $\mathbb{D}$ and therefore $f$ is locally univalent. For $|\lambda|=1$ we define 
\begin{equation*}\label{Psi}
\psi_{\lambda}(z)=z\frac{H(z)}{G(z)^{\lambda}}\,, \qquad\qquad z\in\mathbb{D},
\end{equation*}
from where
\[\psi'_\lambda(z)= \psi_\lambda(z)\left(\frac{1}{z}+\frac{H'(z)}{H(z)}-\lambda\frac{G'(z)}{G(z)}\right)=\psi_\lambda(z)\frac{1}{z}\left(1+\frac{zH'(z)}{H(z)}\right)(1-\lambda\omega_{\alpha}(z)).\]
On the other hand, 
\begin{equation*}
\frac{z\varphi'_{\alpha}(z)}{\varphi_\alpha(z)}= 1+z\frac{H'(z)}{H(z)}-z\frac{G'(z)}{G(z)}=\left(1+z\frac{H'(z)}{H(z)}\right)(1-\omega_{\alpha}(z)),   
\end{equation*}
from which we get
\begin{equation}\label{eq3}
\psi'_\lambda(z)=\psi_\lambda(z)\frac{1-\lambda\omega_\alpha(z)}{1-\omega_\alpha(z)}\frac{\varphi'_\alpha(z)}{\varphi_{\alpha}(z)},
\end{equation}
and
\begin{equation*}
\frac{\psi''_\lambda(z)}{\psi'_\lambda(z)}=\frac{\psi'_\lambda(z)}{\psi_\lambda(z)}+\frac{\varphi''_\alpha(z)}{\varphi'_{\alpha}(z)}-\frac{\varphi'_\alpha(z)}{\varphi_{\alpha}(z)}
+\frac{(1-\lambda)\omega'_\alpha(z)}{(1-\lambda\omega_\alpha(z))(1-\omega_\alpha(z))}.  
\end{equation*}
Replacing (\ref{eq3}), in the last equality, we get
\begin{equation}\label{eq4}
 \frac{\psi''_\lambda(z)}{\psi'_\lambda(z)}=\frac{(1-\lambda)\omega_\alpha(z)}{1-\omega_\alpha(z)}\frac{\varphi'_\alpha(z)}{\varphi_{\alpha}(z)}+\frac{\varphi''_\alpha(z)}{\varphi'_{\alpha}(z)}
+\frac{(1-\lambda)\omega'_\alpha(z)}{(1-\lambda\omega_\alpha(z))(1-\omega_\alpha(z))},
\end{equation} 
and therefore
\begin{align*}
(1-|z|^2)\left|z\frac{\psi''_\lambda(z)}{\psi'_\lambda(z)}\right|&=(1-|z|^2)\left|\frac{(1-\lambda)\omega_\alpha(z)}{1-\omega_\alpha(z)}\frac{z\varphi'_\alpha(z)}{\varphi_{\alpha}(z)}+\frac{z\varphi''_\alpha(z)}{\varphi'_{\alpha}(z)}+\frac{z(1-\lambda)\omega'_\alpha(z)}{(1-\lambda\omega_\alpha(z))(1-\omega_\alpha(z))}\right|\\[0.3cm]
&\hspace{-1.5cm} = (1-|z|^2)|\alpha|\left|\frac{(1-\lambda)\omega(z)}{1-\omega_\alpha(z)}\frac{z\varphi'_\alpha(z)}{\varphi_{\alpha}(z)}+\frac{z\varphi'(z)}{\varphi(z)}-1
+\frac{z(1-\lambda)\omega'(z)}{(1-\lambda\omega_\alpha(z))(1-\omega_\alpha(z))}\right|\\[0.3cm]
&\hspace{-1.5cm} \leq (1-|z|^2)|\alpha|\left(\left|\frac{(1-\lambda)\omega(z)}{1-\omega_\alpha(z)}\frac{z\varphi'_\alpha(z)}{\varphi_{\alpha}(z)}\right|+\left|\frac{z\varphi'(z)}{\varphi(z)}-1\right|+\left|\frac{z(1-\lambda)\omega'(z)}{(1-\lambda\omega_\alpha(z))(1-\omega_\alpha(z))}\right|\right) .
\end{align*}
Since $|\alpha|\leq 1/4$ then $\varphi_\alpha\in S$, therefore
\begin{equation} \label{Theorem univ 2}
\left|\frac{z\varphi'_\alpha(z)}{\varphi_{\alpha}(z)}\right|\leq \frac{1+|z|}{1-|z|} \qquad\qquad \text{and}  \qquad\qquad \left|\frac{z\varphi'(z)}{\varphi(z)}-1\right|\leq \frac{2}{1-|z|},
\end{equation}
being the second inequality a consequence of $\varphi \in S$. Moreover, it is easy to see that
\begin{equation}\label{ineq-omega}
  \frac{|\omega(z)|}{|1-\omega_\alpha(z)|}\leq \frac{4\|\omega\|}{4-\|\omega\|}, 
\end{equation}
and
\begin{equation}\label{ineq-omega-ast}
\frac{|\omega'(z)|(1-|z|^2)}{|(1-\lambda\omega_\alpha(z))(1-\omega_\alpha(z))|}\leq \frac{\|\omega^\ast\|(1-|\omega(z)|^2)}{(1-|\alpha||\omega(z)|)^2}\leq \frac{\|\omega^\ast\|(1-|\omega(z)|^2)}{(1-|\omega(z)|/4)^2}\leq \frac{16}{15}\|\omega^\ast\|,
\end{equation}
for all $z\in\mathbb{D}.$ Thus, using inequalities (\ref{Theorem univ 2}), (\ref{ineq-omega}), and (\ref{ineq-omega-ast}) it follows that
\begin{equation*}
 (1-|z|^2)\left|z\frac{\psi''_\lambda(z)}{\psi'_\lambda(z)}\right|\leq 2|\alpha|\left(\frac{16\|\omega\|}{4-\|\omega\|}+2+\frac{16}{15}\|\omega^*\|\right).
\end{equation*}
Inequality (\ref{cond-thm-1}) and Becker's criterion(see \cite{B72})  imply that $\psi_\lambda$ is univalent for all $\lambda$ in the unit circle, which complete the proof.
\end{proof}
 Note that in the case when $f$ is an analytic mapping, $\omega \equiv 0$ and consequently $\delta=0$ and $\|\omega^\ast\|=0.$ Then the inequality (\ref{cond-thm-1}) becomes in $|\alpha|\leq 1/4,$ which is the best known bound of the values of $\alpha$ such that the corresponding $f_\alpha$ is univalent in $\D$.\\
 
The authors proved in \cite[Lemma 3]{ABHSV1} that if $\varphi$ is a univalent analytic function in a linear invariant family $\mathcal F$ with order $\beta$, then \begin{equation}\label{family orden beta} (1-|z|^2)\left|z\frac{\varphi'(z)}{\varphi(z)}\right|\leq 2\beta, \quad \text{for all} \quad z\in \D.
\end{equation}
By using this inequality we can improve the last theorem.
\begin{cor} 
Let $f, \varphi$ be as in Theorem\,\ref{Theorem-univ.}. Suppose moreover that  $\varphi$ is univalent in a linear invariant family $\mathcal F$ with order $\beta.$ Then $f_\alpha$ is stable univalent logharmonic if $\alpha$ satisfies the condition 
$$|\alpha|\leq \min\left\lbrace \frac{1}{2(4\delta+\beta + \frac{1}{2}+16\|\omega^*\|/15)}\,, \,  \frac{1}{4}\right\rbrace.$$
\end{cor}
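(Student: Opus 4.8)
The plan is to run the proof of Theorem \ref{Theorem-univ.} almost verbatim, sharpening only the single estimate in which the crude consequence of $\varphi\in S$ was used and replacing it by the linear--invariance bound \eqref{family orden beta}. Since the hypothesis forces $|\alpha|\le 1/4$, every step of that proof up to and including formula \eqref{eq4} remains valid without change: for $|\lambda|=1$ I would set $\psi_\lambda(z)=zH(z)/G(z)^\lambda$, and from \eqref{eq4}, using $\omega_\alpha=\alpha\omega$ and $z\varphi_\alpha''/\varphi_\alpha'=\alpha\bigl(z\varphi'/\varphi-1\bigr)$, obtain
\[
z\frac{\psi_\lambda''(z)}{\psi_\lambda'(z)}=\alpha\left(\frac{(1-\lambda)\omega(z)}{1-\omega_\alpha(z)}\,\frac{z\varphi_\alpha'(z)}{\varphi_\alpha(z)}+\frac{z\varphi'(z)}{\varphi(z)}-1+\frac{z(1-\lambda)\omega'(z)}{(1-\lambda\omega_\alpha(z))(1-\omega_\alpha(z))}\right).
\]

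Next I would bound $(1-|z|^2)\bigl|z\psi_\lambda''/\psi_\lambda'\bigr|$ by the triangle inequality. The first and third summands are treated exactly as in Theorem \ref{Theorem-univ.}: using $|1-\lambda|\le 2$, the inclusion $\varphi_\alpha\in S$ (legitimate since $|\alpha|\le 1/4$) together with $(1-|z|^2)\tfrac{1+|z|}{1-|z|}=(1+|z|)^2\le 4$, and inequalities \eqref{ineq-omega} and \eqref{ineq-omega-ast}, they contribute at most $8\delta|\alpha|$ and $\tfrac{32}{15}\|\omega^\ast\|\,|\alpha|$ respectively. For the middle summand, in place of the second inequality in \eqref{Theorem univ 2} I would instead use the triangle inequality and the estimate \eqref{family orden beta}:
\[
(1-|z|^2)\left|\frac{z\varphi'(z)}{\varphi(z)}-1\right|\le (1-|z|^2)\left|\frac{z\varphi'(z)}{\varphi(z)}\right|+(1-|z|^2)\le 2\beta+1 .
\]

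Combining the three bounds gives
\[
(1-|z|^2)\left|z\frac{\psi_\lambda''(z)}{\psi_\lambda'(z)}\right|\le |\alpha|\left(8\delta+2\beta+1+\frac{32}{15}\|\omega^\ast\|\right)=2|\alpha|\left(4\delta+\beta+\tfrac12+\frac{16}{15}\|\omega^\ast\|\right),
\]
which the hypothesis on $|\alpha|$ makes $\le 1$; Becker's criterion \cite{B72} then shows that $\psi_\lambda$ is univalent for every $\lambda$ on the unit circle, i.e. $f_\alpha\in SU_{Lh}$.

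There is essentially no obstacle here, the argument being a refinement rather than a new one; the only point worth stressing is that the sharper bound $2\beta+1$ takes the place of the value $4$ that $\varphi\in S$ furnished for the term $(1-|z|^2)|z\varphi'/\varphi-1|$, while the estimates for the two $\omega$--terms, and the use of $\varphi_\alpha\in S$, still require $|\alpha|\le 1/4$. That is precisely why that value must be kept inside the minimum, even in the typical situation where the first expression exceeds $1/4$.
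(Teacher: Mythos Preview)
Your proposal is correct and follows essentially the same approach as the paper: both arguments rerun the proof of Theorem \ref{Theorem-univ.} line for line, replacing only the second inequality in \eqref{Theorem univ 2} by the estimate
\[
(1-|z|^2)\left|\frac{z\varphi'(z)}{\varphi(z)}-1\right|\le 2\beta+1
\]
coming from \eqref{family orden beta} and the triangle inequality, and keep the condition $|\alpha|\le 1/4$ to ensure $\varphi_\alpha\in S$.
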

\begin{proof}
The proof is completely analogous to that of the previous theorem; it is enough to replace the second condition in \eqref{Theorem univ 2} by the inequality $$\left|\frac{z\varphi'(z)}{\varphi(z)}-1\right|\leq \frac{2\beta}{1-|z|^2}+1,$$
which is an immediate consequence of \eqref{family orden beta}.
\end{proof}
\section{Integral Transform of the second type}\label{sec Iint T. 2 type}
Following the same procedure as in the previous section, we extend the integral transformation \eqref{int-tipo-2} to the case of logharmonic mappings as follows: given $\alpha\in\overline{\mathbb{D}}$ and $f(z)=zh(z)g(z)$ a locally univalent logharmonic mapping defined in the unit disk, with the normalization $h(0)=g(0)=1$ and $0\notin hg(\mathbb{D}),$ we define $F_\alpha$ be the logharmonic mapping, with dilatation $\omega_\alpha=\alpha\omega,$ given by $F_\alpha=zH(z)G(z),$ where $H,G$ satisfy the system
\begin{equation}\label{second type} z\frac{H(z)}{G(z)}=\Phi_\alpha(z)=\int_{0}^{z}(\varphi'(\xi))^\alpha d\xi \qquad\text{and}\qquad \omega_\alpha=\alpha\omega,
\end{equation}
with the initial conditions $H(0)=G(0)=1.$\\

The following two results are dual to Proposition\,\ref{prop f_alpha is starlike} and Theorem\,\ref{Theorem-univ.}, respectively. In both cases we will assume the above normalization.

\begin{prop}Let $f(z)=zh(z)\overline{g(z)}$ be a logharmonic mapping defined in $\D$ with dilatation $\omega$. If $zh(z)/g(z)=\varphi(z)$ is a convex mapping, then $F_\alpha$ defined by \eqref{second type} is a starlike logharmonic mapping in the unit disk for $\alpha\in[0,1]$.
\end{prop}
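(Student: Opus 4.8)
The plan is to mirror the proof of Proposition \ref{prop f_alpha is starlike}, replacing the identity governing the first--type transform by the one for \eqref{second type}. Recall that $F_\alpha=zH(z)\overline{G(z)}$ is determined by $\Phi_\alpha(z)=zH(z)/G(z)=\int_0^z(\varphi'(\xi))^{\alpha}\,d\xi$ together with $\omega_\alpha=\alpha\omega$, so by Theorem A it is enough to prove that $\Phi_\alpha$ is a starlike analytic mapping. First I would record two preliminary observations. Since $\varphi$ is convex it is locally univalent, hence $\varphi'$ is zero--free in $\mathbb D$; therefore $(\varphi')^{\alpha}$ is a well--defined, zero--free analytic function once the branch is normalized to equal $1$ at the origin, and accordingly $\Phi_\alpha(0)=0$ and $\Phi'_\alpha(0)=(\varphi'(0))^{\alpha}=1$. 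Moreover, exactly as in Proposition \ref{prop f_alpha is starlike}, if $f_z(z_0)=0$ for some $z_0\in\mathbb D$ then $z_0\neq0$, $z_0h'(z_0)+h(z_0)=0$ and $g'(z_0)=0$, which forces $\varphi'(z_0)=0$, contradicting convexity; hence $J_f>0$ in $\mathbb D$.

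The heart of the proof is the elementary identity
\[
1+z\frac{\Phi''_\alpha(z)}{\Phi'_\alpha(z)}=1+\alpha\,z\frac{\varphi''(z)}{\varphi'(z)}=(1-\alpha)+\alpha\left(1+z\frac{\varphi''(z)}{\varphi'(z)}\right),
\]
which follows immediately from $\Phi'_\alpha=(\varphi')^{\alpha}$. For $\alpha\in(0,1]$, taking real parts and using that convexity of $\varphi$ means $\mathrm{Re}\{1+z\varphi''(z)/\varphi'(z)\}>0$ throughout $\mathbb D$, we obtain $\mathrm{Re}\{1+z\Phi''_\alpha(z)/\Phi'_\alpha(z)\}>1-\alpha\ge0$; the remaining case $\alpha=0$ is trivial since $\Phi_0(z)=z$. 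Thus $\Phi_\alpha$ is convex for every $\alpha\in[0,1]$, and in particular it is a starlike analytic function with $\Phi_\alpha(0)=0$ and $\Phi'_\alpha(0)=1$.

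Finally I would transfer starlikeness from $\Phi_\alpha$ to $F_\alpha$ through Theorem A, whose hypothesis $0\notin HG(\mathbb D)$ still has to be verified. Being convex, $\Phi_\alpha$ is univalent, so it vanishes, with a simple zero, only at the origin; hence the shear construction of \cite{LP} applied to the pair $(\Phi_\alpha,\omega_\alpha)$ yields analytic, zero--free functions $H$ and $G$ with $H(0)=G(0)=1$ and $zH/G=\Phi_\alpha$, which in particular makes $F_\alpha$ a sense--preserving logharmonic mapping with $0\notin HG(\mathbb D)$. Theorem A then gives that $F_\alpha=zH(z)\overline{G(z)}$ is a starlike logharmonic mapping for all $\alpha\in[0,1]$, as claimed. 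All the computations above are routine; the only step requiring care is precisely this last one, namely checking that $(\Phi_\alpha,\omega_\alpha)$ shears to a bona fide logharmonic mapping with zero--free $H$ and $G$ --- which is guaranteed because $\varphi'$, and hence $\Phi'_\alpha$, is zero--free and $\Phi_\alpha$ has a simple zero only at $0$. In contrast with the remark after Proposition \ref{prop f_alpha is starlike}, the value $\alpha=1$ appears to be the natural limit of this argument, since $\mathrm{Re}\{z\varphi''(z)/\varphi'(z)\}$ may tend to $-1$.
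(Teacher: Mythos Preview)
Your proof is correct and follows essentially the same route as the paper: compute $1+z\Phi_\alpha''/\Phi_\alpha'=(1-\alpha)+\alpha(1+z\varphi''/\varphi')$, use convexity of $\varphi$ to deduce that $\Phi_\alpha$ is convex (hence starlike), and then invoke Theorem~A. You are in fact a bit more careful than the paper, which writes the inequality as ``$>1-\alpha>0$'' (sloppy at $\alpha=1$) and does not pause to check the branch of $(\varphi')^\alpha$ or the hypothesis $0\notin HG(\mathbb D)$; your handling of the endpoint cases and of the shear construction is a welcome addition rather than a deviation.
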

\begin{proof} As in Proposition\,\ref{prop f_alpha is starlike}, the condition on $\varphi$ implies that $J_f$ is positive, so $f$ is locally univalent. Since $\varphi$ is a convex mapping, we have that for $\alpha\in[0,1],$ \[\textrm{Re}\left\{1+z\frac{\Phi_\alpha''(z)}{\Phi_\alpha'(z)}\right\}=1+\alpha\textrm{Re}\left\{z\frac{\varphi''(z)}{\varphi'(z)}\right\}> 1-\alpha>0,\]
from which it concludes that $\Phi_\alpha(z)$ is a convex mapping and in particular starlike. Using Theorem\,A, we have that $F_\alpha$ is a starlike logharmonic mapping.
\end{proof}

\begin{thm} Let $f(z)=zh(z)\overline{g(z)}$ be a logharmonic mapping defined in $\mathbb{D}$ with dilatation $\omega$ and such that $zh(z)/g(z)=\varphi(z)$ is a univalent mapping. Then $F_\alpha$ defined as in \eqref{second type} is a stable univalent logharmonic mapping if $\alpha$ satisfies the inequality 
\begin{equation*}\label{cond-thm-2}
|\alpha|\leq\dfrac{1}{2(4\delta+3+16\|\omega^*\|/15)},
\end{equation*}
with $\delta$ and $\|\omega^*\|$ as in Theorem\,\ref{Theorem-univ.}.
\end{thm}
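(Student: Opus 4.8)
The plan is to run the proof of Theorem \ref{Theorem-univ.} almost line for line: the only structural change is that the integrand $(\varphi(\xi)/\xi)^\alpha$ of the first transform is replaced by $(\varphi'(\xi))^\alpha$, and this alters exactly one of the three quantities that must be estimated — which is precisely what accounts for the difference between the constant obtained in Theorem \ref{Theorem-univ.} and the one claimed here.

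First I would record the preliminary facts, exactly as in the proposition and theorem preceding: the univalence of $\varphi=zh/g$ (in particular $\varphi'\neq 0$ in $\mathbb{D}$) forces $J_f>0$, so $f$ is locally univalent and the construction \eqref{second type} is legitimate. Since the bound asserted for $|\alpha|$ is at most $1/6<1/4$, the classical result recalled in the Introduction (univalence of \eqref{int-tipo-2} for $|\alpha|\le 1/4$) gives $\Phi_\alpha\in S$; combined with $|\omega_\alpha|=|\alpha|\,|\omega|<1$, this makes each $\psi_\lambda(z)=zH(z)/G(z)^{\lambda}$, $|\lambda|=1$, a well-defined locally univalent analytic function with $\psi_\lambda(0)=0$ and $\psi_\lambda'(0)=1$. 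As in Theorem \ref{Theorem-univ.}, it then suffices to prove that each such $\psi_\lambda$ is univalent, since this is what makes $F_\alpha$ stable univalent logharmonic.

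Next I would compute the logarithmic derivatives from the system \eqref{second type}. As in \eqref{eq3}--\eqref{eq4} one gets $\psi_\lambda'/\psi_\lambda=\tfrac1z(1+zH'/H)(1-\lambda\omega_\alpha)$ and $z\Phi_\alpha'/\Phi_\alpha=(1+zH'/H)(1-\omega_\alpha)$, and after the same cancellation,
\[
z\frac{\psi_\lambda''(z)}{\psi_\lambda'(z)}=\frac{(1-\lambda)\omega_\alpha(z)}{1-\omega_\alpha(z)}\,\frac{z\Phi_\alpha'(z)}{\Phi_\alpha(z)}+\frac{z\Phi_\alpha''(z)}{\Phi_\alpha'(z)}+\frac{z(1-\lambda)\omega_\alpha'(z)}{(1-\lambda\omega_\alpha(z))(1-\omega_\alpha(z))}.
\]
Because $\Phi_\alpha'=(\varphi')^{\alpha}$, the middle term equals $\alpha\,z\varphi''/\varphi'$, and since $\omega_\alpha=\alpha\omega$ the factor $\alpha$ comes out of the entire right-hand side.

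Finally I would estimate $(1-|z|^2)\,|z\psi_\lambda''/\psi_\lambda'|$ term by term. The first term is bounded verbatim as in Theorem \ref{Theorem-univ.}, using $|1-\lambda|\le 2$, inequality \eqref{ineq-omega}, and the bound $|z\Phi_\alpha'/\Phi_\alpha|\le(1+|z|)/(1-|z|)$ arising from $\Phi_\alpha\in S$; after multiplying by $1-|z|^2$ it contributes $2|\alpha|\cdot 4\delta$. The third term is literally inequality \eqref{ineq-omega-ast}, contributing $2|\alpha|\cdot\tfrac{16}{15}\|\omega^*\|$. The single new ingredient is the middle term: since $\varphi\in S$, the classical distortion estimate $\bigl|\,z\varphi''(z)/\varphi'(z)-2|z|^2/(1-|z|^2)\,\bigr|\le 4|z|/(1-|z|^2)$ yields $(1-|z|^2)\,|z\varphi''(z)/\varphi'(z)|\le 2|z|^2+4|z|\le 6$, hence a contribution $2|\alpha|\cdot 3$. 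Adding the three, $(1-|z|^2)\,|z\psi_\lambda''(z)/\psi_\lambda'(z)|\le 2|\alpha|\bigl(4\delta+3+\tfrac{16}{15}\|\omega^*\|\bigr)$, which is $\le 1$ under the stated hypothesis on $\alpha$; Becker's criterion \cite{B72} then gives that every $\psi_\lambda$ is univalent, finishing the argument. The only step needing genuinely new input is the distortion bound for $z\varphi''/\varphi'$ — it is exactly what promotes the $2$ of Theorem \ref{Theorem-univ.} to the $3$ here — and I do not anticipate any other obstacle.
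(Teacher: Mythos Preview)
Your proposal is correct and follows exactly the paper's own proof: you reproduce the identity \eqref{eq4} with $\Phi_\alpha$ in place of $\varphi_\alpha$, bound the first and third summands via \eqref{Theorem univ 2}, \eqref{ineq-omega}, \eqref{ineq-omega-ast} (using $\Phi_\alpha\in S$ for $|\alpha|\le 1/4$), and replace the middle estimate by the distortion bound $(1-|z|^2)\,|z\varphi''/\varphi'|\le 6$ for $\varphi\in S$, which is precisely the paper's inequality \eqref{eq15}. The paper's proof is terser but argues identically, so there is nothing to add.
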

\begin{proof} Since $\varphi$ is univalent we have $J_f>0$ and
\begin{equation}\label{eq15}\sup_{z\in\D}(1-|z|^2)\left|\frac{\Phi''_\alpha(z)}{\Phi'_\alpha(z)} \right|\leq |\alpha|\sup_{z\in\D}(1-|z|^2)\left|\frac{\varphi''(z)}{\varphi'(z)} \right|\leq 6|\alpha|.
\end{equation}
It follows from equation (\ref{eq4}) and inequalities (\ref{Theorem univ 2}), (\ref{ineq-omega}), and (\ref{ineq-omega-ast}) that $$(1-|z|^2)\left|z\frac{\psi''_\lambda(z)}{\psi'_\lambda(z)}\right|\leq2|\alpha|\left(4\delta+3+16\|\omega^\ast\|/15\right),$$ from which we get, in virtue of the classical univalence criterion of Becker, that $F_\alpha$ is a stable univalent logharmonic mapping.
\end{proof} 
Note that if $\varphi$ is a convex mapping, then the right side of the inequality (\ref{eq15}) can be replaced by $4|\alpha|$, from where the range of the values of $\alpha$ for which the corresponding mapping $F_\alpha$ is stable univalent, is determined by the inequality $$|\alpha|\leq \frac{1}{4(2\delta+1+8\|\omega
^*\|/15)}.$$
\section{Alternative definition of non vanishing Integral Transform}\label{sec non vanishing log harmonic map}

In this section, we extend the integral transformations \eqref{int-tipo-1} and \eqref{int-tipo-2} to non-vanishing logharmonic mappings in $\mathbb{D}.$ To this end, we will use the fact that there is a branch of $\log f,$ which is harmonic in $\mathbb{D}$ with dilatation $\omega_{\log f} = \omega_f,$ and apply to it the theory that we developed in \cite{ABHSV1}. Note that $\omega_{\log f} = \omega_f$ implies that $\log f$ is a sense-preserving harmonic mapping and therefore $J_f$ is positive in $\mathbb{D}.$

As was mentioned in Subsection\,\ref{sec log map}, if $f$ is a non-vanishing logharmonic mapping in $\mathbb{D}$ with dilatation $\omega,$ then $f=h\overline{g},$ where $h,g$ are non-vanishing analytic functions in $\mathbb{D}.$ In this case $\omega=g'h/gh',$ and if we assume the normalization $h(0)=g(0)=1,$ we can choose branches of $\log f,$ $\log h,$ and $\log g,$ satisfying
\[\log f(0)=\log h(0)=\log g(0)=0\qquad\text{and}\qquad \log f=\log h+\overline{\log g}.\]
The following lemma is a general tool that appears as a natural version of Theorem\,1 in \cite{CH} for logharmonic mappings. We will use this lemma in the next subsections.

\begin{lem}\label{non-vanishing}
Let $f=h\overline{g}$ be a non-vanishing logharmonic mapping in $\mathbb{D}$ with dilatation $\omega,$ and suppose that $\psi=\log h/g$ is a univalent mapping such that $\Omega:=\psi(\mathbb{D})$ is $M-$linearly connected. Then $f$ is univalent in $\D$ if $\|\omega\|<1/(2M+1)$.
\end{lem}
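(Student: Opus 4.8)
The plan is to reduce the univalence of the logharmonic mapping $f=h\overline g$ to that of its (harmonic) logarithm $\log f=\log h+\overline{\log g}$, and then invoke the linear-connectivity criterion for harmonic mappings from \cite{CH}. First I would record the normalization: since $h,g$ are non-vanishing analytic functions with $h(0)=g(0)=1$, we may choose the branches of $\log h,\log g,\log f$ vanishing at the origin, so that $\log f=\log h+\overline{\log g}$ is a harmonic mapping on $\mathbb D$ with analytic part $\log h$, co-analytic part $\log g$, and dilatation $\omega_{\log f}=(\log g)'/(\log h)'=g'h/(gh')=\omega$. In particular $\|\omega_{\log f}\|=\|\omega\|<1$, so $\log f$ is sense-preserving, as already noted in the text.

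Next I would bring in the shear structure. The analytic function associated to $\log f$ by the Clunie--Sheil-Small shear is $\psi=\log h-\log g=\log(h/g)$ — wait, the statement uses $\psi=\log h/g$, which I read as $\log(h/g)=\log h-\log g$; this is precisely the difference of the analytic and co-analytic parts of $\log f$. By hypothesis $\psi$ is univalent and $\Omega=\psi(\mathbb D)$ is $M$-linearly connected. Now Theorem 1 of \cite{CH} states (in the harmonic setting) that if a harmonic mapping $\Psi=H_0+\overline{G_0}$ is such that $H_0-G_0$ maps $\mathbb D$ univalently onto an $M$-linearly connected domain, then $\Psi$ is univalent provided its dilatation satisfies $\|\omega\|<1/(2M+1)$. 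Applying this with $\Psi=\log f$, $H_0=\log h$, $G_0=\log g$, so that $H_0-G_0=\psi$ and the dilatation is $\omega$, the hypothesis $\|\omega\|<1/(2M+1)$ gives that $\log f$ is univalent on $\mathbb D$.

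Finally I would pass from $\log f$ back to $f$: if $\log f(z_1)=\log f(z_2)$ then $f(z_1)=f(z_2)$ is immediate, but we need the converse direction — that injectivity of $\log f$ forces injectivity of $f$. This is clear because $f=\exp(\log f)$ with a fixed branch, so $f$ is the composition of the injective map $\log f$ with the exponential, and the exponential is injective on any set of the form $\log f(\mathbb D)$ provided that set contains no two points differing by a nonzero multiple of $2\pi i$. The cleanest way to guarantee this is to observe that $f(z_1)=f(z_2)$ implies $\log f(z_1)-\log f(z_2)\in 2\pi i\,\mathbb Z$; since $\log f$ is continuous on the connected set $\mathbb D$ and the set $2\pi i\,\mathbb Z$ is discrete, and since $\log f(z_1)=\log f(z_2)$ whenever $z_1,z_2$ are close, a connectedness argument shows $\log f(z_1)=\log f(z_2)$, and then univalence of $\log f$ gives $z_1=z_2$. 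Hence $f$ is univalent.

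The main obstacle I anticipate is not any of these steps individually but making the invocation of \cite[Theorem 1]{CH} genuinely legitimate: one must check that the normalization and hypotheses there (sense-preserving, the precise meaning of $M$-linear connectivity of the image, the exact role of the dilatation bound) match what we have here verbatim, and in particular that no extra normalization of $\log f$ (such as $H_0(0)=0$, $H_0'(0)=1$) is being used that we cannot supply. If \cite{CH} assumes $H_0'(0)=1$, a pre-composition or rescaling argument would be needed, but since $(\log h)'(0)$ need not equal $1$, I would instead want the version of the theorem that is invariant under affine changes of the analytic shear variable — this is exactly where the $M$-linear connectivity hypothesis (an affinely natural condition) earns its keep, and I would state the lemma's proof as ``this is Theorem 1 of \cite{CH} applied to the sense-preserving harmonic mapping $\log f$, whose shear function is $\psi$,'' spelling out only the dilatation identity $\omega_{\log f}=\omega$ and the $\log f\leadsto f$ passage in detail.
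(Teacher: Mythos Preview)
Your proposal has a genuine gap in the final step, where you try to pass from univalence of $\log f$ to univalence of $f$. The implication ``$\log f$ univalent $\Rightarrow$ $f=\exp(\log f)$ univalent'' is false in general: for instance $\log f(z)=3\pi i\,z$ is univalent on $\mathbb D$, yet $f(z)=e^{3\pi i z}$ satisfies $f(-2/3)=f(2/3)=1$. Your connectedness argument does not repair this. The set $\{(z_1,z_2)\in\mathbb D^2:\ f(z_1)=f(z_2)\}$ need not be connected, so there is no path from an off--diagonal pair to the diagonal along which the $2\pi i\,\mathbb Z$--valued difference $\log f(z_1)-\log f(z_2)$ would be forced to vanish; and the sentence ``$\log f(z_1)=\log f(z_2)$ whenever $z_1,z_2$ are close'' is simply false unless $z_1=z_2$. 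There is also a second mismatch: Theorem~1 of \cite{CH} places the $M$--linear connectivity hypothesis on the \emph{analytic part} $H_0$ of the harmonic map, not on the shear $H_0-G_0$, so even the first step is not a direct citation.

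The paper avoids the exponential--lifting issue entirely by never passing through $\log f$. It argues directly with $f$: writing $f=e^{\psi}|g|^2$, the assumption $f(z_1)=f(z_2)$ gives $e^{\psi(z_1)}|g(z_1)|^2=e^{\psi(z_2)}|g(z_2)|^2$, hence $|\psi(z_1)-\psi(z_2)|\le 2|\log g(z_1)-\log g(z_2)|$. One then chooses a curve $S\subset\Omega$ from $\psi(z_1)$ to $\psi(z_2)$ with $\ell(S)\le M|\psi(z_1)-\psi(z_2)|$, pulls it back via $\gamma=\psi^{-1}(S)$, and estimates
\[
|\log g(z_1)-\log g(z_2)|\le\int_{\gamma}\Big|\frac{g'}{g}\Big|\,|d\xi|
=\int_{\gamma}\Big|\frac{\omega}{1-\omega}\Big|\,|\psi'|\,|d\xi|
\le\frac{\|\omega\|}{1-\|\omega\|}\,\ell(S),
\]
using $g'/g=\omega\,\psi'/(1-\omega)$. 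Combining, $|\psi(z_1)-\psi(z_2)|\le \dfrac{2M\|\omega\|}{1-\|\omega\|}\,|\psi(z_1)-\psi(z_2)|$, which is strictly smaller than $|\psi(z_1)-\psi(z_2)|$ precisely when $\|\omega\|<1/(2M+1)$. This is an \emph{adaptation} of the argument in \cite{CH} to the identity $f=e^{\psi}|g|^2$, not a black--box application of that theorem, and that reworking is also what produces the constant $1/(2M+1)$.
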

\begin{proof}
Suppose there are $z_1\neq z_2$ in $\D$ such that $f(z_1)=f(z_2),$ and let $S$ be a path in $\Omega$ joining $\psi(z_1)$ to $\psi(z_2),$ such that $\ell(S)\leq M |\psi(z_1)-\psi(z_2)|.$ So,
$$e^{\psi(z_1)}|g(z_1)|^2=e^{\psi(z_2)}|g(z_2)|^2,$$
and in consequence,
\begin{equation*}\label{eq}
|\psi(z_1)-\psi(z_2)| \leq |2\log g(z_1)-2\log g(z_2)| \leq  2\int\limits_\gamma \left|\dfrac{g'(\xi)}{g(\xi)}\right|\, |d\xi|,
\end{equation*}
where $\gamma=\psi^{-1}(S).$ From here and the equality $$\frac{g'}{g}=\omega\frac{h'}{h}=\frac{\omega}{1-\omega}\psi',$$
it follows that $$|\psi(z_1)-\psi(z_2)| \leq 2\frac{\|\omega\|}{1-\|\omega\|}\int\limits_\gamma |\psi'(\xi)| |d\xi|\leq 2\frac{\|\omega\|}{1-\|\omega\|}l(S)<|\psi(z_1)-\psi(z_2)|,$$
if $\omega$ satisfies $\|\omega\|<1/(2M+1).$ This contradiction ends the proof.
\end{proof} 

\subsection{Integral Transform of the first type for non-vanishing logharmonic mappings.} We consider a non-vanishing logharmonic mapping $f=h\overline g$ defined in $\mathbb{D},$ with dilatation $\omega=g'h/h'g,$ and normalized by $h(0)=g(0)=1.$ We suppose moreover that $\varphi=\log h-\log g$ is zero only at $z=0.$ Given $\alpha\in\overline{\mathbb{D}},$ we define the integral transform of the first type of $f$ by
\begin{equation}\label{alt-def-first-type} f_\alpha=e^H\,\overline{e^G}=\exp\{H+\overline{G}\},
\end{equation}
where $H$ and $G$ satisfy the system
\[H(z)-G(z)=\varphi_\alpha(z)=\int_0^z \left(\frac{\varphi(\zeta)}{\zeta}\right)^\alpha d\zeta\qquad\text{and}\qquad \frac{G'}{H'}=\alpha\omega,\]
with the initial conditions $H(0)=G(0)=0.$ In other words, $f_\alpha$ is deﬁned in such way that $\log f_\alpha=H+\overline G$ is a harmonic branch of the logarithm of $f_\alpha$ in $\mathbb{D},$ which is the horizontal shear of $\varphi_\alpha$ with dilatation $\alpha\omega$. The reader can find the details of the shear construction of harmonic mappings in  \cite{CSS}.

\begin{prop} Let $f=h\overline{g}$ be a non-vanishing logharmonic mapping in $\mathbb{D}$ with dilatation $\omega$  and let $f_\alpha$ be defined by equation (\ref{alt-def-first-type}).
\begin{enumerate}
    \item[$(i)$] If $\varphi=\log h/g$ is a convex function and $\alpha\in[0,0.745]$, then $f_\alpha$ is a univalent logharmonic mapping in $\D$.\\
    \item[$(ii)$] If $\varphi=\log h/g$ is a starlike function and $\alpha\in[0,0.6]$, then $f_\alpha$ is a univalent logharmonic mapping in $\D$.
\end{enumerate}    
\end{prop}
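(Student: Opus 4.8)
The plan is to reduce both statements to Lemma \ref{non-vanishing} by estimating the second dilatation of $f_\alpha$ and the linear connectivity constant of $\varphi_\alpha(\mathbb{D})$. Note that, by construction, $\log f_\alpha = H + \overline{G}$ is the horizontal shear of $\varphi_\alpha$ with dilatation $\omega_\alpha = \alpha\omega$, so $\psi := H - G = \varphi_\alpha$ plays the role of the ``$\psi$'' in Lemma \ref{non-vanishing}, and the dilatation of the logharmonic map $f_\alpha$ is exactly $\alpha\omega$, whence $\|\omega_{f_\alpha}\| = |\alpha|\,\|\omega\| \le |\alpha|$ since $\|\omega\| < 1$. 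Therefore, to apply Lemma \ref{non-vanishing} it suffices to find an $M$ with $\varphi_\alpha(\mathbb{D})$ being $M$-linearly connected and to check that $|\alpha| \le 1/(2M+1)$, i.e. $|\alpha|(2M+1) \le 1$.

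The first step is thus to control the geometry of $\varphi_\alpha = \int_0^z(\varphi(\zeta)/\zeta)^\alpha\,d\zeta$. In case $(i)$, $\varphi$ is convex, so $\operatorname{Re}\{z\varphi'(z)/\varphi(z)\} > 1/2$, and the computation already carried out in the proof of Proposition \ref{prop f_alpha is starlike} gives
\[
1 + \operatorname{Re}\left\{z\frac{\varphi_\alpha''(z)}{\varphi_\alpha'(z)}\right\} = 1 + \alpha\operatorname{Re}\left\{z\frac{\varphi'(z)}{\varphi(z)} - 1\right\} > 1 - \frac{\alpha}{2}
\]
for $\alpha \in [0,1]$; hence $\varphi_\alpha$ is convex of order $1 - \alpha/2$. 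A convex domain is $1$-linearly connected, so $M = 1$ works and Lemma \ref{non-vanishing} gives univalence as soon as $|\alpha| \le 1/3$ --- but this is weaker than the claimed $0.745$. To reach $0.745$ I would instead invoke the known linear-connectivity estimate for images of functions that are convex of order $\gamma$: a domain bounded by a curve that is convex of order $\gamma$ is $M(\gamma)$-linearly connected with an explicit $M(\gamma)$ (for instance, via the classical bound relating the argument of the tangent to $\gamma$, one gets $M(\gamma)$ decreasing in $\gamma$), and then solve the resulting inequality $\alpha\,(2M(1 - \alpha/2) + 1) \le 1$ numerically; the threshold $0.745$ is precisely the root of that equation. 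In case $(ii)$, $\varphi$ is starlike, so $\operatorname{Re}\{z\varphi'(z)/\varphi(z)\} > 0$, and the same identity shows $\operatorname{Re}\{1 + z\varphi_\alpha''/\varphi_\alpha'\} > 1 - \alpha$, i.e. $\varphi_\alpha$ is convex of order $1 - \alpha$; feeding $\gamma = 1 - \alpha$ into the same linear-connectivity estimate and solving $\alpha\,(2M(1-\alpha) + 1) \le 1$ yields the threshold $0.6$.

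The key steps, in order, are: (1) identify $\psi = \varphi_\alpha$ and observe $\omega_{f_\alpha} = \alpha\omega$ with $\|\omega_{f_\alpha}\| \le |\alpha|$; (2) use the convexity/starlikeness hypothesis on $\varphi$ together with the $\varphi_\alpha''/\varphi_\alpha'$ identity to show $\varphi_\alpha$ is convex of order $1 - \alpha/2$ (resp. $1 - \alpha$); (3) quote the explicit bound $M(\gamma)$ for the linear connectivity of a domain bounded by a convex-of-order-$\gamma$ curve; (4) apply Lemma \ref{non-vanishing}, which requires $\|\omega_{f_\alpha}\| < 1/(2M+1)$, and solve the resulting transcendental inequality in $\alpha$ to extract the numerical bounds $0.745$ and $0.6$. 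The main obstacle I anticipate is step (3): obtaining a sharp enough, and correctly normalized, linear-connectivity constant $M(\gamma)$ for convex-of-order-$\gamma$ domains --- a crude bound will not produce the stated numbers, so one must use the precise geometric estimate (or a known reference for it) and then verify that the numerics genuinely close at $0.745$ and $0.6$ rather than something slightly smaller.
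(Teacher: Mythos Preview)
Your approach has a genuine gap at step~(3), and it cannot be repaired within the framework you chose. The linear connectivity constant $M$ of any domain satisfies $M\ge 1$ by definition: two points at Euclidean distance $d$ can never be joined by a curve of length $<d$. Hence ``convex of higher order'' does not push $M$ below $1$; it is already $M=1$ for every convex domain. Consequently, Lemma~\ref{non-vanishing} applied to $f_\alpha$ can never yield anything better than $|\alpha|\|\omega\|<1/3$, and since $\|\omega\|$ may be arbitrarily close to $1$ this gives at best $\alpha<1/3$. The transcendental equations you propose, $\alpha\,(2M(1-\alpha/2)+1)=1$ and $\alpha\,(2M(1-\alpha)+1)=1$, would require $M(\gamma)<1$ to produce roots at $0.745$ and $0.6$, and no such $M$ exists.

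The paper's argument is entirely different and does not pass through Lemma~\ref{non-vanishing} at all. Instead of controlling the geometry of $\varphi_\alpha=H-G$, it shows that $H$ itself is convex. From $H'=(1-\alpha\omega)^{-1}\varphi_\alpha'$ one computes
\[
1+\frac{zH''(z)}{H'(z)}=1-\alpha+\alpha\,\frac{z\varphi'(z)}{\varphi(z)}+\frac{\alpha z\omega'(z)}{1-\alpha\omega(z)},
\]
and bounds the last term by $-\alpha\arcsin\alpha$ from below in real part; combining with $\mathrm{Re}\{z\varphi'/\varphi\}\ge 1/2$ (convex case) or $\ge 0$ (starlike case) gives
\[
\mathrm{Re}\Bigl\{1+\tfrac{zH''}{H'}\Bigr\}\ge 1-\tfrac{\alpha}{2}-\alpha\arcsin\alpha\quad\text{resp.}\quad \ge 1-\alpha-\alpha\arcsin\alpha,
\]
whose positive roots are precisely $\approx 0.745$ and $\approx 0.6$. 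One then invokes Theorem~1 of \cite{CH} (if the analytic part $H$ of a sense-preserving harmonic map is convex, the map $H+\overline G$ is univalent), so $\log f_\alpha$ is univalent and hence $f_\alpha$ is. The extra $\omega$-term coming from $H'=(1-\alpha\omega)^{-1}\varphi_\alpha'$ and its $\arcsin\alpha$ bound are exactly what you are missing; they, not any refinement of linear connectivity, are the source of the thresholds $0.745$ and $0.6$.
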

\begin{proof}
We have that $\log f_\alpha=H+\overline{G}.$ So, in virtue of Theorem\,1 in \cite{CH}, it is sufficient to prove, in each case, that $H$ is convex. To prove $(i)$ we see that $H$ satisfies $$1+z\frac{H''(z)}{H'(z)}=1-\alpha+\alpha z\frac{\varphi'(z)}{\varphi(z)}+\alpha \frac{z\omega'(z)}{1-\alpha\omega(z)},$$
whence, since $\alpha$ is a real number, we get \begin{equation}\label{prop2}\mbox{Re}\left\{1+z\frac{H''(z)}{H'(z)}\right\}\geq 1-\alpha+\alpha \mbox{Re}\left\{z\frac{\varphi'(z)}{\varphi(z)}\right\}-\alpha\arcsin{\alpha}.
\end{equation} 
Therefore, from $\textrm{Re}\{z\varphi'/\varphi(z)\}\geq 1/2$ and inequality (\ref{prop2}), we conclude that $$\mbox{Re}\left\{1+z\frac{H''(z)}{H'(z)}\right\}\geq 1-\frac{\alpha}{2}-\alpha\arcsin{\alpha}> 0,$$ if $\alpha\in[0,0.745]$. Now, to prove $(ii)$ we use $\textrm{Re}\{z\varphi'/\varphi(z)\}\geq 0$ and inequality (\ref{prop2}) to obtain $$\textrm{Re}\left\{1+z\frac{H''(z)}{H'(z)}\right\}\geq 1-\alpha-\alpha\arcsin{\alpha}> 0,$$
when $\alpha\in[0,0.6]$. 
\end{proof}
\begin{prop}\label{prop 4.3}
 Let $f=h\overline{g}$ be a non-vanishing logharmonic mapping defined in $\mathbb{D}$ with $\|\omega\|<1/3$ and let $f_\alpha$ be defined by equation (\ref{alt-def-first-type}). If $\varphi=\log h/g$ is a convex function and $\alpha\in [0,2]$, then $f_\alpha$ is univalent.
 \end{prop}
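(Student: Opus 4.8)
The plan is to bring the statement within reach of Lemma~\ref{non-vanishing}, applied directly to $f_\alpha$. Writing $f_\alpha=h_\alpha\overline{g_\alpha}$ with $h_\alpha=e^{H}$ and $g_\alpha=e^{G}$, we have $\log(h_\alpha/g_\alpha)=H-G=\varphi_\alpha$, while the dilatation of $f_\alpha$ equals $g_\alpha'h_\alpha/(g_\alpha h_\alpha')=G'/H'=\alpha\omega$. Hence it is enough to check that $\varphi_\alpha$ is univalent with an $M$-linearly connected image and that $\|\alpha\omega\|<1/(2M+1)$; Lemma~\ref{non-vanishing} will then give that $f_\alpha$ is univalent.

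First I would prove that $\varphi_\alpha$ is convex for every $\alpha\in[0,2]$. A convex mapping vanishing at the origin has no other zeros, so $\varphi_\alpha'(z)=(\varphi(z)/z)^\alpha$ is a well-defined non-vanishing analytic function on $\D$, and a logarithmic differentiation gives $z\varphi_\alpha''(z)/\varphi_\alpha'(z)=\alpha\bigl(z\varphi'(z)/\varphi(z)-1\bigr)$, so that
\[
\mathrm{Re}\left\{1+z\frac{\varphi_\alpha''(z)}{\varphi_\alpha'(z)}\right\}=1-\alpha+\alpha\,\mathrm{Re}\left\{z\frac{\varphi'(z)}{\varphi(z)}\right\}.
\]
Since $\varphi$ is convex, $\mathrm{Re}\{z\varphi'(z)/\varphi(z)\}>1/2$ on $\D$, whence the right-hand side exceeds $1-\alpha/2\ge0$ for $\alpha\in[0,2]$; thus $\varphi_\alpha$ is convex and, in particular, univalent. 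Its image $\Omega_\alpha=\varphi_\alpha(\D)$ is then a convex domain, hence $1$-linearly connected (the Euclidean segment joining two of its points lies in $\Omega_\alpha$ and has length equal to their distance), so we may take $M=1$.

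Finally I would invoke Lemma~\ref{non-vanishing} for $f_\alpha$ with $M=1$: the dilatation of $f_\alpha$ is $\alpha\omega$, and once $\alpha\|\omega\|<1/(2M+1)=1/3$ is known, the lemma yields at once that $f_\alpha$ is univalent in $\D$ (note also that $\alpha\|\omega\|<1$ forces $1-\alpha\omega\ne0$, so $f_\alpha$ is well defined and sense-preserving, $J_{f_\alpha}>0$). The step that I expect to require the most care is precisely this dilatation estimate: the sup-norm of the dilatation of $f_\alpha$ equals $\alpha\|\omega\|$, not $\|\omega\|$, so one has to combine the bound on $\|\omega\|$ with the range of $\alpha$ to stay under the constant $1/3$ supplied by Lemma~\ref{non-vanishing}. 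A secondary delicate point is the endpoint $\alpha=2$ in the convexity estimate, which is covered because the inequality $\mathrm{Re}\{z\varphi'/\varphi\}>1/2$ is strict on the open disc.
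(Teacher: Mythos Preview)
Your approach is exactly the paper's: show that $\varphi_\alpha$ is convex on the whole range $\alpha\in[0,2]$ (which you do correctly via $\mathrm{Re}\{z\varphi'/\varphi\}>1/2$), note that a convex image is $1$-linearly connected, and invoke Lemma~\ref{non-vanishing}. The paper's proof is a single sentence citing precisely these two facts.

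However, the concern you flag at the end is not a technicality but a genuine gap, and the paper's proof shares it. Lemma~\ref{non-vanishing} applied to $f_\alpha$ with $M=1$ requires $\|\omega_{f_\alpha}\|=\alpha\|\omega\|<1/3$. The hypothesis $\|\omega\|<1/3$ secures this only for $\alpha\in[0,1]$; for $\alpha\in(1,2]$ it can fail (take $\|\omega\|$ close to $1/3$ and $\alpha=2$, so that $\alpha\|\omega\|$ is close to $2/3$). Thus the argument, both as written in the paper and as you have reproduced it, delivers the conclusion only on $[0,1]$ unless the hypothesis is strengthened to $\alpha\|\omega\|<1/3$. This appears to be an oversight in the stated range of $\alpha$; compare the analogous proposition for the second-type transform, where the authors state the range $\alpha\in[0,1]$.
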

\begin{proof}
The proof follows as a direct application of Lemma\,\ref{non-vanishing} and the fact that $\varphi_\alpha$ is a convex mapping for $\alpha\in [0,2],$ case in which $\varphi_\alpha(\D)$  is a $M$-linearly connected domain with $M=1$.
\end{proof}
\begin{thm} \label{Theorem-nonvanishing-first type}
 Let $f=h\overline{g}$ be a non-vanishing logharmonic mapping in $\mathbb{D}$ with dilatation $\omega$ and let $f_\alpha$ be defined by equation (\ref{alt-def-first-type}). If $\varphi=\log h/g$ is a univalent function and $|\alpha|\leq 0.165,$ then $f_\alpha$ is univalent.
\end{thm}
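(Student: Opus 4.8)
The plan is to deduce the univalence of $f_\alpha$ from Lemma~\ref{non-vanishing}. Writing $f_\alpha=e^{H}\,\overline{e^{G}}$ with $H-G=\varphi_\alpha$ and $G'=\alpha\omega H'$, the function attached to $f_\alpha$ in the sense of Lemma~\ref{non-vanishing} is $\psi=\log(e^{H}/e^{G})=H-G=\varphi_\alpha$, and a one-line computation ($\omega_{e^{H}\overline{e^{G}}}=(e^{G})'e^{H}/(e^{G}(e^{H})')=G'/H'$) shows that the dilatation of $f_\alpha$ is $\alpha\omega$. Hence it suffices to prove that $\varphi_\alpha$ is univalent with image $\Omega_\alpha:=\varphi_\alpha(\mathbb{D})$ an $M$-linearly connected domain and that $\|\alpha\omega\|<1/(2M+1)$; since $\|\alpha\omega\|\le|\alpha|$, the second requirement reduces to $|\alpha|<1/(2M+1)$. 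This reduction is just bookkeeping; the content is the estimate of $M$.

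For the analytic input, note that $0.165<1/4$, so $\varphi$ univalent forces $\varphi_\alpha(z)=\int_0^z(\varphi(\zeta)/\zeta)^\alpha\,d\zeta\in S$, exactly as in Section~\ref{sec I T of firs type}. Differentiating $\log\varphi_\alpha'(z)=\alpha\log(\varphi(z)/z)$ gives $z\varphi_\alpha''(z)/\varphi_\alpha'(z)=\alpha\big(z\varphi'(z)/\varphi(z)-1\big)$, and the distortion bound for $\varphi\in S$ recorded in \eqref{Theorem univ 2} yields
\[
(1-|z|^2)\left|z\frac{\varphi_\alpha''(z)}{\varphi_\alpha'(z)}\right|\le 4|\alpha|\le 0.66<1,\qquad z\in\mathbb{D}.
\]
Thus $\varphi_\alpha$ satisfies a Becker-type bound with a constant strictly below $1$; standard theory then provides a quasiconformal extension of $\varphi_\alpha$ to $\widehat{\mathbb{C}}$ whose dilatation is controlled by $4|\alpha|$, so $\Omega_\alpha$ is a quasidisk and therefore $M$-linearly connected, with $M$ bounded above by an explicit function $M(|\alpha|)$ decreasing to $1$ as $|\alpha|\to0$. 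One then checks that $|\alpha|=0.165$ is (just below) the largest value for which $|\alpha|<1/\big(2M(|\alpha|)+1\big)$ still holds, and concludes by Lemma~\ref{non-vanishing}.

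I expect the genuine difficulty to be this last quantitative step: turning the pre-Schwarzian bound $4|\alpha|$ into a linear-connectivity constant sharp enough to survive being fed into $1/(2M+1)$ near the threshold — a crude quasidisk estimate (with a constant growing exponentially in the maximal dilatation) is nowhere near good enough, so one needs a tailored estimate or a direct geometric argument: given $w_j=\varphi_\alpha(z_j)$, join them by the $\varphi_\alpha$-image of the hyperbolic geodesic (or the segment) $[z_1,z_2]$ and compare its length $\int|\varphi_\alpha'|$ with $|\varphi_\alpha(z_1)-\varphi_\alpha(z_2)|$ using the bound on $\varphi_\alpha''/\varphi_\alpha'$, in the spirit of the growth/distortion estimates for functions of small pre-Schwarzian norm. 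An alternative I would keep in reserve is to apply a harmonic Becker-type criterion from \cite{ABHSV1} directly to $\log f_\alpha=H+\overline{G}$: estimating $zH''/H'=\alpha(z\varphi'/\varphi-1)+\alpha z\omega'/(1-\alpha\omega)$ together with the dilatation term by Schwarz--Pick bounds the relevant quantity by roughly $6|\alpha|$, again producing a threshold near $1/6$; but that route still has to pass from univalence of $\log f_\alpha$ to univalence of $f_\alpha$, which Lemma~\ref{non-vanishing} handles for free.
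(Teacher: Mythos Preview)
Your main line of attack --- feeding $f_\alpha$ into Lemma~\ref{non-vanishing} and trying to control the linear-connectivity constant $M$ of $\varphi_\alpha(\mathbb{D})$ --- is \emph{not} what the paper does, and the gap you yourself flag is real: extracting from the pre-Schwarzian bound $(1-|z|^2)\bigl|z\varphi_\alpha''/\varphi_\alpha'\bigr|\le 4|\alpha|$ a linear-connectivity constant $M(|\alpha|)$ sharp enough that $|\alpha|<1/(2M(|\alpha|)+1)$ holds at $|\alpha|=0.165$ is not something standard quasidisk technology delivers, and there is no ``tailored estimate'' in the paper to rescue it. So as written the argument does not close.

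The paper's proof is essentially your ``alternative in reserve'', carried out precisely. One sets $\Psi_\lambda=H+\lambda G$ for $|\lambda|=1$ and computes
\[
\frac{z\Psi_\lambda''}{\Psi_\lambda'}=\alpha\!\left(\frac{z\varphi'}{\varphi}-1\right)+\frac{(1+\lambda)\,z\,\omega_\alpha'}{(1-\omega_\alpha)(1+\lambda\omega_\alpha)},
\]
then bounds the first term by $4|\alpha|$ via \eqref{Theorem univ 2} and the second by $2|\alpha|\|\omega^*\|/(1-|\alpha|^2)\le 2|\alpha|/(1-|\alpha|^2)$ via Schwarz--Pick, obtaining
\[
(1-|z|^2)\left|\frac{z\Psi_\lambda''}{\Psi_\lambda'}\right|\le 2|\alpha|\left(2+\frac{1}{1-|\alpha|^2}\right).
\]
Requiring this to be $\le 1$ is exactly the source of the constant $0.165$ (your ``roughly $6|\alpha|$'' is the small-$\alpha$ linearisation of this). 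Becker then gives $\Psi_\lambda$ univalent for every $|\lambda|=1$, i.e.\ $\log f_\alpha=H+\overline{G}$ is stable harmonic univalent, and the paper concludes that $f_\alpha$ is univalent. Note in particular that Lemma~\ref{non-vanishing} is \emph{not} invoked anywhere in this proof. One small correction: your final remark that ``Lemma~\ref{non-vanishing} handles for free'' the passage from univalence of $\log f_\alpha$ to univalence of $f_\alpha$ is not accurate --- that lemma needs $\varphi_\alpha(\mathbb{D})$ to be $M$-linearly connected together with $\|\alpha\omega\|<1/(2M+1)$, which is precisely the quantitative input you could not produce, so it does not give the last step for free either.
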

\begin{proof}
For $|\lambda|=1$ we define $\Psi_\lambda=H+\lambda G.$ A direct calculation shows that
\begin{equation*}
    \frac{H''(z)}{H'(z)}=\alpha\left(\frac{\varphi'(z)}{\varphi(z)}-\frac{1}{z}\right)+\frac{\omega'_\alpha(z)}{1-\omega_\alpha(z)}
\end{equation*}
and
\begin{equation*}
\frac{z\Psi''_\lambda(z)}{\Psi'_\lambda(z)}=\frac{zH''(z)}{H'(z)}+\frac{\lambda z\omega'_\alpha(z)}{1+\lambda\omega_\alpha(z)}=\alpha\left(\frac{z\varphi'(z)}{\varphi(z)}-1\right)+\frac{(1+\lambda)z\omega'_\alpha(z)}{(1-\omega_\alpha(z))(1+\lambda\omega_\alpha(z))},
\end{equation*}
from which we see that
\begin{align*}
 (1-|z|^2)\left|\frac{z\Psi''_\lambda(z)}{\Psi'_\lambda(z)}\right|&\leq |\alpha|\left[(1-|z|^2)\left |\frac{z\varphi'(z)}{\varphi(z)}-1\right|+2\frac{(1-|z|^2)|\omega'(z)|}{(1-|\alpha||\omega(z)|)^2}\right]\\
 & \leq |\alpha|\left[4+2\frac{(1-|\omega(z)|^2)\left\|\omega^*\right\|}{(1-|\alpha||\omega(z)|)^2}\right]\\
 &\leq 2|\alpha|\left(2+\frac{1}{1-|\alpha|^2}\right),
\end{align*}
being the last inequality a consequence of
$$\max_{z\in \D}\frac{(1-|\omega(z)|^2)}{(1-|\alpha||\omega(z)|)^2}\leq\frac{1}{1-|\alpha|^2}\qquad\text{and}\qquad \left\|\omega^*\right\|\leq 1.$$
It follows from the Becker’s  criterion that $\Psi_\lambda$ is univalent if  $2|\alpha|\left(2+\frac{1}{1-|\alpha|^2}\right)\leq 1$, whence $H+\overline{G}$ is stable harmonic univalent if $|\alpha|\leq 0.165.$ In consequence, $f_\alpha$ is univalent for these values of $\alpha.$
\end{proof}
\subsection{Integral Transform of the second type for non-vanishing logharmonic mappings.} The definition of the integral transform of the second type for non-vanishing logharmonic mappings, is completely analogous to that given in the previous subsection: let $f=h\overline{g}$ be a non-vanishing logharmonic mapping in $\mathbb{D}$ with dilatation $\omega=g'h/h'g$ and normalized by $h(0)=g(0)=1.$ Note that from the condition $|\omega(z)|<1$ for all $z\in\mathbb{D},$ it follows that $\varphi=\log h-\log g$ is locally univalent in $\mathbb{D}.$ We define the logharmonic mapping $F_\alpha=e^H\overline{e^G},$ where $H,G$ satisfy the system \begin{equation}\label{Phi}
H(z)-G(z)=\Phi_\alpha(z)=\int_0^z\left(\varphi'(\zeta)\right)^\alpha d\zeta \qquad\text{and}\qquad\omega_{F_\alpha}=\alpha\omega,
\end{equation}
with the initial conditions $H(0)=G(0)=0.$
\begin{prop} Let $f=h\overline{g}$ be a non-vanishing logharmonic mapping defined in $\mathbb{D}$ with dilatation $\omega$ and let $F_\alpha$ be defined by equation (\ref{Phi}). If $\varphi$ is a convex function and $\alpha\in[0,0.605]$, then $F_\alpha$ is a univalent logharmonic mapping in $\D$.
\end{prop}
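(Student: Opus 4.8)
The plan is to follow the pattern of the first Proposition of this section and reduce everything to a convexity statement for an analytic function. Writing $\log F_\alpha = H+\overline{G}$, the system \eqref{Phi} says exactly that $\log F_\alpha$ is the horizontal shear of $\Phi_\alpha$ with dilatation $\alpha\omega$; since $|\alpha|\le 0.605<1$ and $|\omega|<1$ in $\D$, the dilatation $\alpha\omega$ of this harmonic mapping has modulus less than $1$, so $\log F_\alpha$ is sense-preserving. By Theorem\,1 in \cite{CH} it then suffices to prove that the analytic part $H$ is a convex function in $\D$, and this is the only thing I would need to check.

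To that end I would first derive a workable formula for $1+zH''(z)/H'(z)$. From \eqref{Phi} one has $H'-G'=\Phi'_\alpha=(\varphi')^\alpha$ together with $G'/H'=\alpha\omega$, hence $H'(1-\alpha\omega)=(\varphi')^\alpha$; since $\varphi$ is convex it is locally univalent, so $\varphi'$ is zero-free and $H'$ is zero-free as well, which also makes $(\varphi')^\alpha$ a legitimate analytic branch. Taking a logarithmic derivative gives $H''/H'=\alpha\,\varphi''/\varphi'+\alpha\omega'/(1-\alpha\omega)$, and therefore
\[1+z\frac{H''(z)}{H'(z)}=1+\alpha\,\frac{z\varphi''(z)}{\varphi'(z)}+\frac{\alpha\,z\omega'(z)}{1-\alpha\omega(z)}.\]

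Since $\alpha\in[0,1)$ is real, taking real parts and using the triangle inequality yields
\[\mathrm{Re}\left\{1+z\frac{H''(z)}{H'(z)}\right\}\ \ge\ 1+\alpha\,\mathrm{Re}\left\{\frac{z\varphi''(z)}{\varphi'(z)}\right\}-\alpha\left|\frac{z\omega'(z)}{1-\alpha\omega(z)}\right|.\]
Now convexity of $\varphi$ gives $\mathrm{Re}\{z\varphi''(z)/\varphi'(z)\}>-1$, while the term involving $\omega$ is controlled by the same estimate $|z\omega'(z)/(1-\alpha\omega(z))|\le\arcsin\alpha$ that was used in the proof of the first-type Proposition of this section. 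Combining these, $\mathrm{Re}\{1+zH''(z)/H'(z)\}>1-\alpha-\alpha\arcsin\alpha$, and it only remains to observe that $\alpha\mapsto\alpha(1+\arcsin\alpha)$ is increasing on $[0,1)$ and is still smaller than $1$ at $\alpha=0.605$; hence $H$ is convex for $\alpha\in[0,0.605]$ and the Proposition follows from Theorem\,1 in \cite{CH}.

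I expect the only delicate point to be the control of the dilatation term $\alpha z\omega'(z)/(1-\alpha\omega(z))$ --- it is exactly there that the Schwarz--Pick--type bound for analytic self-maps of $\D$ enters, and it is that bound, not the convexity of $\varphi$ (which contributes only the harmless $-\alpha$), that fixes the admissible range of $\alpha$. Once the estimate is in hand the rest is a routine transcription of the first-type argument, the remaining work being the elementary numerical check that $\alpha(1+\arcsin\alpha)<1$ throughout $[0,0.605]$ (equivalently, that the solution $\alpha_0$ of $\alpha_0(1+\arcsin\alpha_0)=1$ satisfies $\alpha_0>0.605$).
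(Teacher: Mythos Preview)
Your proof is correct and follows essentially the same route as the paper: derive $1+zH''/H'=1+\alpha z\varphi''/\varphi'+\alpha z\omega'/(1-\alpha\omega)$, use convexity of $\varphi$ and the bound $|z\omega'/(1-\alpha\omega)|\le\arcsin\alpha$ to obtain $\mathrm{Re}\{1+zH''/H'\}>1-\alpha-\alpha\arcsin\alpha$, and conclude via Theorem\,1 in \cite{CH}. Your write-up is in fact more careful than the paper's, since you justify that $H'$ is zero-free and that the branch $(\varphi')^\alpha$ is well defined.
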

\begin{proof}A straightforward calculation leads us to 
$$1+z\frac{H''(z)}{H'(z)}=1+\alpha z\frac{\varphi''(z)}{\varphi'(z)}+\alpha \frac{z\omega'(z)}{1-\alpha\omega(z)},$$
which implies, together with the convexity of $\varphi,$ that
\begin{align*}
\textrm{Re}\left\{1+z\frac{H''(z)}{H'(z)}\right\}&= 1-\alpha+\alpha \textrm{Re}\left\{1+z\frac{\varphi''(z)}{\varphi'(z)}\right\}+\alpha\textrm{Re}\left\{ \frac{z\omega'(z)}{1-\alpha\omega(z)}\right\}\\
&\geq 1-\alpha - \alpha\arcsin{\alpha}.
\end{align*}
Hence, $H$ is convex, and therefore $\log F_\alpha$ is univalent, if $|\alpha|\leq 0.605$.
\end{proof}
The proof of the following proposition is essentially the same as that of Proposition\,\ref{prop 4.3}; so we omit its proof.
\begin{prop}
 Let $f=h\overline{g}$ be a non vanishing logharmonic mapping defined in the unit disk with $\|\omega\|<1/3$ and let $F_\alpha$ be defined by equation (\ref{Phi}). If $\varphi$ is a convex function, then  $F_\alpha$ is  univalent for $\alpha \in [0,1]$.
\end{prop}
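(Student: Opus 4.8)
The plan is to follow verbatim the strategy of Proposition~\ref{prop 4.3}, only replacing the first-type integral transform by the second-type one. Writing $F_\alpha=e^{H}\overline{e^{G}}$ as in \eqref{Phi}, the analytic function to which Lemma~\ref{non-vanishing} is meant to be applied is $\psi:=\log\!\big(e^{H}/e^{G}\big)=H-G=\Phi_\alpha$, and the dilatation of $F_\alpha$ is $\omega_{F_\alpha}=\alpha\omega$. As recalled at the start of Section~\ref{sec non vanishing log harmonic map}, $\omega_{\log F_\alpha}=\omega_{F_\alpha}$, so $\log F_\alpha=H+\overline{G}$ is a sense-preserving harmonic mapping and $F_\alpha$ is a (sense-preserving, open) logharmonic mapping; hence it suffices to establish injectivity, for which Lemma~\ref{non-vanishing} is exactly the right tool once we know that $\Phi_\alpha$ is univalent with linearly connected image.

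The key---and only computational---step is to check that $\Phi_\alpha$ is in fact convex for every $\alpha\in[0,1]$. Since $\Phi_\alpha'=(\varphi')^{\alpha}$ (a well-defined single-valued branch because $\varphi$ convex implies $\varphi'$ non-vanishing on $\D$), differentiating gives
\[
1+z\frac{\Phi_\alpha''(z)}{\Phi_\alpha'(z)}=1+\alpha\,z\frac{\varphi''(z)}{\varphi'(z)}=(1-\alpha)+\alpha\left(1+z\frac{\varphi''(z)}{\varphi'(z)}\right),
\]
and since $\varphi$ is convex one has $\mathrm{Re}\{1+z\varphi''(z)/\varphi'(z)\}>0$ on $\D$; as $1-\alpha\ge 0$, $\alpha\ge 0$ and $\alpha\in[0,1]$, the right-hand side has strictly positive real part. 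Thus $\Phi_\alpha$ is a convex (in particular univalent) mapping of $\D$. This is precisely where the range $[0,1]$ appears, in contrast with the range $[0,2]$ of Proposition~\ref{prop 4.3}: here convexity of $\varphi$ controls $1+z\varphi''/\varphi'$, whereas there it controls $z\varphi'/\varphi$.

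It remains to feed this into Lemma~\ref{non-vanishing}. The image $\Omega:=\Phi_\alpha(\D)$ is convex, hence $M$-linearly connected with the optimal constant $M=1$: for any two of its points the straight segment joining them lies in $\Omega$ and has length equal to their Euclidean distance. Lemma~\ref{non-vanishing} therefore yields univalence of $F_\alpha$ provided $\|\omega_{F_\alpha}\|<1/(2M+1)=1/3$; and indeed $\|\omega_{F_\alpha}\|=|\alpha|\,\|\omega\|\le\|\omega\|<1/3$ by hypothesis, since $\alpha\in[0,1]$. This closes the argument. Since the convexity computation is immediate and Lemma~\ref{non-vanishing} is already in hand, there is no genuine analytic obstacle; the only points requiring care are bookkeeping ones---identifying $\Phi_\alpha=H-G$ as the correct map for Lemma~\ref{non-vanishing}, verifying $\omega_{F_\alpha}=\alpha\omega$ so that the hypothesis $\|\omega\|<1/3$ is exactly what survives the reduction, and recalling that a convex set has sharp linear-connectivity constant $M=1$ (so that one obtains the clean threshold $1/3$ rather than a weaker bound).
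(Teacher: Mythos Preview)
Your proof is correct and follows exactly the approach the paper intends: the paper omits the proof, saying it is essentially the same as that of Proposition~\ref{prop 4.3}, namely show $\Phi_\alpha$ is convex (hence $M=1$) and apply Lemma~\ref{non-vanishing}. Your write-up supplies precisely those details, including the correct observation that the relevant dilatation for Lemma~\ref{non-vanishing} is $\omega_{F_\alpha}=\alpha\omega$, whose sup-norm is at most $\|\omega\|<1/3$ since $\alpha\in[0,1]$.
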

\begin{thm}
 Let $f=h\overline{g}$ be a non-vanishing logharmonic mapping in $\mathbb{D}$ with dilatation $\omega$ and let $F_\alpha$ be defined by equation (\ref{Phi}). If $\varphi$ is a univalent function and $|\alpha|\leq 0.125$, then $F_\alpha$ is univalent.
\end{thm}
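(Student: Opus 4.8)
The plan is to treat this statement as the non-vanishing, second-type analogue of Theorem~\ref{Theorem-nonvanishing-first type} and to follow that proof almost line for line, replacing the base map $\varphi_\alpha$ by $\Phi_\alpha=\int_0^z(\varphi'(\zeta))^{\alpha}\,d\zeta$. First I would record that, since $\omega_{F_\alpha}=\alpha\omega$ has supremum norm strictly below $1$, the branch $\log F_\alpha=H+\overline{G}$ given by \eqref{Phi} is a sense-preserving harmonic mapping; so it will be enough to prove that $H+\overline{G}$ is stable harmonic univalent and then to deduce the univalence of $F_\alpha$ from that of $\log F_\alpha$, exactly as in Theorem~\ref{Theorem-nonvanishing-first type}. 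Then, fixing $\lambda$ with $|\lambda|=1$ and putting $\Psi_\lambda=H+\lambda G$, I would extract from \eqref{Phi} the identities $H'-G'=(\varphi')^{\alpha}$ and $G'=\alpha\omega H'$, which give
\[
H'(z)=\frac{(\varphi'(z))^{\alpha}}{1-\alpha\omega(z)},\qquad \Psi_\lambda'(z)=H'(z)\bigl(1+\lambda\alpha\omega(z)\bigr).
\]

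Next I would differentiate logarithmically, obtaining $\dfrac{H''}{H'}=\alpha\dfrac{\varphi''}{\varphi'}+\dfrac{\alpha\omega'}{1-\alpha\omega}$, and hence
\[
\frac{z\Psi_\lambda''(z)}{\Psi_\lambda'(z)}=\alpha\,\frac{z\varphi''(z)}{\varphi'(z)}+\frac{(1+\lambda)\,\alpha z\,\omega'(z)}{\bigl(1-\alpha\omega(z)\bigr)\bigl(1+\lambda\alpha\omega(z)\bigr)}.
\]
To estimate the right-hand side I would use two standard inputs: since $\varphi\in S$, the classical distortion theorem gives $(1-|z|^2)\,|z\varphi''(z)/\varphi'(z)|\le 6$; and since $\omega$ maps $\mathbb{D}$ into itself, the Schwarz--Pick inequality gives $\|\omega^{\ast}\|\le 1$, so that $(1-|z|^2)|\omega'(z)|\le 1-|\omega(z)|^2$. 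Combining these with $|1+\lambda|\le 2$ and the elementary maximum $\max_{0\le t\le 1}(1-t^2)/(1-|\alpha|t)^2=1/(1-|\alpha|^2)$ (attained at $t=|\alpha|$) should yield
\[
(1-|z|^2)\left|\frac{z\Psi_\lambda''(z)}{\Psi_\lambda'(z)}\right|\le 2|\alpha|\left(3+\frac{1}{1-|\alpha|^2}\right),\qquad z\in\mathbb{D}.
\]
Then, whenever $|\alpha|$ is small enough that $2|\alpha|\bigl(3+1/(1-|\alpha|^2)\bigr)\le 1$, Becker's criterion (\cite{B72}) makes $\Psi_\lambda$ univalent for every $\lambda$ on the unit circle, so $\log F_\alpha=H+\overline{G}$ is stable harmonic univalent, and consequently $F_\alpha$ is univalent. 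Solving this inequality is what pins down the numerical threshold $|\alpha|\le 0.125$.

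I expect the routine part to be this whole chain of estimates; the two points that actually need care are, first, the bookkeeping that turns the distortion bound for $\varphi$ and the Schwarz--Pick control of $\omega$ into the clean constant $2|\alpha|(3+1/(1-|\alpha|^2))$ --- the only real freedom being how sharply one estimates the dilatation term against the term $6|\alpha|$ coming from $\varphi\in S$ --- and, second and more importantly, the passage from the stable harmonic univalence of $\log F_\alpha$ to the logharmonic univalence of $F_\alpha$, where one has to be careful because $\exp$ is not globally injective. This last step is, however, precisely the one already used for the first-type transform in the proof of Theorem~\ref{Theorem-nonvanishing-first type}, and it carries over here without change; I would regard it, rather than the computation, as the real content of the argument.
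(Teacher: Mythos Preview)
Your proposal is correct and follows essentially the same approach as the paper: define $\Psi_\lambda=H+\lambda G$, compute $z\Psi_\lambda''/\Psi_\lambda'$ from \eqref{Phi}, bound it via the distortion estimate for $\varphi\in S$ together with Schwarz--Pick for $\omega$, and apply Becker's criterion to obtain the inequality $2|\alpha|\bigl(3+1/(1-|\alpha|^2)\bigr)\le 1$. Your intermediate formula $H''/H'=\alpha\,\varphi''/\varphi'+\alpha\omega'/(1-\alpha\omega)$ is in fact the correct one for the second-type transform; the paper's displayed expressions for $H''/H'$ and $z\Psi_\lambda''/\Psi_\lambda'$ appear to be carried over verbatim from the first-type proof, but its final estimate (written with $\varphi''/\varphi'$ and the bound $\bigl|(1-|z|^2)\varphi''/\varphi'-2\overline{z}\bigr|\le 4$) agrees with yours.
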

\begin{proof}
For $|\lambda|=1$ we define $\Psi_\lambda=H+\lambda G.$ Using (\ref{Phi}), we obtain by a direct calculation
\begin{equation*}
    \frac{H''(z)}{H'(z)}=\alpha\left(\frac{\varphi'(z)}{\varphi(z)}-\frac{1}{z}\right)+\frac{\omega'_\alpha(z)}{1-\omega_\alpha(z)}
\end{equation*}
and
\begin{equation*}
\frac{z\Psi''_\lambda(z)}{\Psi'_\lambda(z)}=\frac{zH''(z)}{H'(z)}+\frac{\lambda z\omega'_\alpha(z)}{1+\lambda\omega_\alpha(z)}=\alpha\left(\frac{z\varphi'(z)}{\varphi(z)}-1\right)+\frac{(1+\lambda)z\omega'_\alpha(z)}{(1-\omega_\alpha(z))(1+\lambda\omega_\alpha(z))}.
\end{equation*}
It follows from the univalence of $\varphi$ and 
$$\max_{z\in \D} \frac{(1-|\omega(z)|^2)}{(1-|\alpha||\omega(z)|)^2}\leq\frac{1}{1-|\alpha|^2},$$
that
\begin{align*}
(1-|z|^2)\left|\frac{z\Phi''_\lambda(z)}{\Phi'_\lambda(z)}\right|&\leq |\alpha|\left(\left|(1-|z|^2)\frac{\varphi''(z)}{\varphi'(z)}-2\overline{z}\right|+2|z|^2+2\frac{(1-|\omega(z)|^2)\left\|\omega^*\right\|}{(1-|\alpha||\omega(z)|)^2}\right)\\
&\leq 2|\alpha|\left(3+\frac{1}{1-|\alpha|^2}\right).
\end{align*}
Consequently, we conclude from the Becker's criterion that $\Psi_\lambda$ is univalent if $|\alpha|\leq 0.125,$ and therefore $\log F_\alpha$ is stable harmonic univalent for these values of $\alpha.$ This complete the proof of the theorem.
\end{proof}

\end{document}